\newtheorem{prop}{Proposition}
\newtheorem{rem}{Remark}
\newcounter{alphthm}
\theoremstyle{remark}
\numberwithin{equation}{section}
\newcommand{\RR}{\mathbb R}
\newcommand{\too}{\longrightarrow}
\newtheorem*{theorem*}{Theorem} 
\newtheorem{theorem}{Theorem}[section]
\newtheorem{proposition}[theorem]{Proposition}
\newtheorem{corollary}[theorem]{Corollary}
\newtheorem{lemma}[theorem]{Lemma}
\DeclareMathOperator{\Hessian}{Hess}
\theoremstyle{definition}
\newtheorem{example}[theorem]{Example}
\theoremstyle{remark}
\title{Mathematical modeling the wildfire propagation in a Randers space}
\author{ Hengameh R. Dehkordi \thanks{ORCID ID: 0000-0002-1738-3373, \hfill\break\indent
		Email:hengamehraeesi@gmail.com, hengameh.r@ufabc.edu.br, \hfill\break\indent
		Phone: +55 (11) 4996-8332    	}\\
	  Center of Mathematics, Computation and Cognition, \\
	Federal University of ABC, \\
	 Santo André, Brazil

}
\date{\today}
\date{}
\begin{document}
\maketitle
%
%
%

\textbf{MSC 2010}: {{53B40; 53B50; 83C57; 83C80}}

\textbf{Keywords}: Randers metric; Huygens' Principle; wavefronts; wave rays;  causal structure; analogue gravity. 

	\begin{abstract}
	The devastating effects of wildfires on the wildlife and their impact on human lives and properties are undeniable. This shows the importance of studying the  spread of wildfire, predicting its behavior and presenting more reliable models for its propagation.	Here, by using the validity of the Huygens’ envelope principle for wavefronts in Randers spaces, we present some models for the propagation of  wildfire in an  $n$-dimensional smooth manifold under the presence of wind. In the models, trajectories of fire particles are tracked and the equations that give the wavefront  at each time are provided. Furthermore, we determine the paths and points of great importance in the process of wildfire management, called strategic paths and points. Finally, we consider two examples of spreading the wildfire in some agricultural land or woodland, for the sake of illustration.
\end{abstract}



\section{Introduction}

Every year, wildfires cause significant damage to the wildlife, jungles, grasslands,  agricultural lands, and natural resources; and threaten infrastructures, properties, and human lives \cite{dore2010carbon}.               Sometimes, the renovation of destroyed regions and the recovery of damaged faunas are impossible. Indeed, wildfires have strongly negative ecological effects  and every year engulf millions hectares of rainforests \cite{flannigan2006forest}. Global warming  and carbon dioxide released into the atmosphere due to wildfires are other issues that can not be ignored \cite{wiedinmyer2007estimates}.

Providing a more accurate and reliable model for spreading the wildfire in time plays an important role in the wildfire management strategies and without any doubt reduces both the financial  and life losses due to wildfire. Finsler geometry, which is a classical branch of differential geometry started by P. Finsler in 1918, is a strong tool to model some real phenomena in anisotropic or inhomogeneous media \cite{ giambo2009genericity, kopacz2017application, yajima2015finsler}. On the propagation of waves and tracking the wave rays, several authors have already applied the Finsler metric \cite{gibbons2011geometry, markvorsen2016finsler}. As a particular case,  the propagation of wildfire waves in dimension $2$ was studied in \cite{markvorsen2016finsler}, in which the author showed that, for a wildfire spreading, the wavefronts and  wave rays are respectively the geometric spheres and geodesics of the corresponding Finsler metric. Very recently, the validity of Huygens' principle was  verified for wavefronts in Finsler spaces of any  dimension $n$ \cite{dehkordi2019huygens}.

The Huygens' principle is frequently used for modeling the growth of wildfire in dimension~$2$. A literature review shows that several authors used some fixed elliptical template fields, such as double ellipse, lemniskata, oval shape, and tear shape, as spherical wavefronts in the process of applying the Huygens' principle \cite{anderson1982modelling, cunbin2014analysis, glasa2009mathematical,   markvorsen2016finsler, richards1995general}. However, the spherical wavefronts used in this process have some deviations from the spherical wavefront in reality \cite{markvorsen2016finsler}. Consequently, sometimes, the model presented based on these spherical wavefronts is neither accurate enough nor reliable. The reason why these template fields are not sometimes so suitable is that the curvature of space is assumed to be zero while, in reality, it is not zero. Therefore, the more the curvature differs from zero, the less the model  approximates the propagation \cite{hilton2017curvature}. By the way, in the models presented by using the Finsler metric, the curvature of space is taken into account. In fact,  the Finsler metric, by taking the curvature  into account,  provides geometric spheres with negligible deviation from the spherical wavefronts created by the wildfire. 


The use of simulators, such as Phoenix, IGNITE, Bushfire, FireMaster, FARSITE  and Prometheus is another technique widely used by the researchers to predict the behavior of the fire \cite{johnston2005overview, papadopoulos2011comparative, tymstra2010development}. To the best of our knowledge,  FARSITE and Prometheus are considered to be the best ones among these simulators \cite{sullivan2009wildland}. The challenge we confront dealing with simulators is that they produce errors during the process. Therefore, regarding  simulators, new problems appear which are reducing the errors and time of computations \cite{arca2007evaluation}.

In \cite{dehkordi2019huygens}, it was shown  that for a given propagation of waves, if one  finds a Finsler metric $F$ in such a way that one of the preimages of Finsler distance function coincides with the wavefront at some time $t$, then the Finsler distance function provides a model for that propagation. In fact, the Finsler geodesics and  distance function are applied to modeling the propagation of  waves. In our work, by using the results of \cite{dehkordi2019huygens}, we suggest two different strategies to predict the progress of fire waves in the case of Randers metrics. In other words, in order to provide the model of a given propagation, one may use the Randers geometric spheres, and then apply the Huygens' principle, or solve the system of Randers geodesic equations, which are indeed paths of fire particles. By the way, by the spread of wildfire in a Randers space, it means a fire spreading across a smooth manifold $M$ under the presence of some wind $W$. The wind here is a smooth vector field $W$ such that $|W|< 1$.

In this work, through three main results, that is Theorems \ref{main.th.1}, \ref{main.th.2} and \ref{mainthm3}, we provide the equations of wave rays  and wavefronts at each time $\tau$. Moreover, the equations of strategic paths and points are presented. 
Let us explain that by\textit{ strategic paths} we mean the paths along which the fire engulfs more regions or it reaches to some special zone that has some priority (protecting fauna, houses, etc.) and should be protected from the fire. In other words, firefighters and equipment should be located along the strategic paths in order to control the fire or prevent it from progressing toward some special direction or area. In fact, having strategic paths increases the chance of success in wildfire management. After finding the model of propagation and strategic paths, by using some information on the behavior of wildfires, one determines some points where the firefighters and equipment should be located to attack the fire. Such points,  that are along strategic paths, are called \textit{strategic points}. It should be pointed out that time is so important when it comes to responding to an emergency incident and also our sources, forces, and equipment are limited against natural phenomena such as wildfires. Therefore, finding the strategic points, which leads to saving time and expense, is vital to wildfire management, especially in wildfires of  big scale and magnitude that are happening every year around the world, such as the United States, Australia, and Brazil.  Hence, it is fair to claim that the study of   strategic points and paths is of great importance in the firefighting process and really demands more attention. Whereas, to the best of our knowledge, there is no study related to such paths or points. 

\subsection{Hypotheses, methodology, and Outline of the paper}

Throughout this work, it is assumed that a wildfire is sweeping across some space $M$ which is a smooth manifold of dimension $n$ and some fuel has been distributed  uniformly and smoothly through $M$  Also the temperature and moisture are constant at all points of $M$. A mild wind $W$, that is a smooth vector field, is blowing in $M$. Although the wind might or not be space-dependent,   it must be time-independent at intervals of time. Also, it is assumed that the fire is stopped before it creates singularities or cut loci. To be closer to  reality, the focus of our work is on the dimension $3$ (for instance $M$ can be any open subset of $\mathbb{R}^3$), however our results are valid for any dimension $n$.


On the methodology, we show that the model of  propagation is merely determined by considering some translated ellipsoids. This way, it is enough to determine the equation of such ellipsoids from the experimental or laboratory data. Afterwards, equations of wavefronts and wave rays associated with the propagation are determined, and finally the model is presented. Following this strategy, we investigate the spread of  wildfire across some space under the presence of  wind which is a constant, Killing or smooth vector field.

The rest of this paper is organized as follows. In Section 2, we recall some preliminaries on Randers spaces and Hyugens' principle. In Section 3, we proceed with the main results and certain discussions on the spread of fire waves. In Section 4, we provide some examples in which certain wildfire is spreading in some agricultural land, woodland or forest under the influence of wind. In Section 5, concluding remarks and some ideas for future works are provided.


\section{preliminaries}

Here, for the sake of self-contained paper, we first provide a brief review of the Randers geometry, and then recall the Huygens' envelope principle.

Let  $M$ be a smooth manifold, $p=(x_1,...,x_n)\in M$ a point and $T_pM$ the  space tangent to $M$ at $p$. Assume that $V=(v_1,...,v_n)\in T_pM$ is a vector according to the canonical basis $\{\frac{\partial}{\partial x_i}\}_{i=1}^n$ for $T_pM$ and $TM$ the tangent bundle, i.e. the collection of all  vectors tangent to $M$ or in other words $$TM=\underset{p\in M}{\dot{\cup}}\{(p,V): V\in T_pM\}.$$
 A \textit{Riemannian metric} on  $M$ is a smooth function $h$ such that to each point $p\in M$, $h$ assigns a positive-definite inner product $h_p:T_pM\times T_pM\to\mathbb{R}$. The smoothness condition on $h$ refers to the fact that the function $p\in M\to h_p(\frac{\partial}{\partial x_i},\frac{\partial}{\partial x_j})\in\mathbb{R}$ must be smooth.  Now, consider a Riemannian metric $\mathfrak{r}$ on $M$, $\mathfrak{r}:TM\times TM\to[0,\infty)$, and a $1$-form $\beta:TM\to[0,\infty)$ such that $\mathfrak{r}(\beta^*,\beta^*)<1$, where $\beta^*$ stands for the dual vector of $\beta$. Considering $\alpha(.)=\sqrt{\mathfrak{r}(.,.)}$, then $F=\alpha+\beta$ is called the \textit{Randers metric}. It is well known that given any Randers metric $F$ on $M$, there is a Riemannian metric $h$ on $M$ and some vector field $W$,  which
 satisfies $h(W, W) < 1$, related to this Randers metric $F$. We call the pair $(W;h)$ the Zermelo data associated with $F$. Due to Zermelo, the problem which associates the Randers metric $F$ with the Riemannian metric $h$ is called the\textit{ Zermelo's navigation problem}. For a literature review on the Zermelo's problem of navigation see \cite{muresan2014zermelo}.  The Randers metric $F$ and the associated Zermelo data are satisfied in the following equation:
\begin{equation}
	\label{Randers1}
	F(V)= \alpha(V) + \beta(V)= \frac{\sqrt{h^2(W,V)+\lambda h(V,V)}} {\lambda}- \frac{h(W,V)}{\lambda},   
\end{equation} 
where $\lambda =1-h(W,W) $ \cite{bao2004zermelo}. As a result of Eq.~(\ref{Randers1}), one has  
\begin{equation}\label{rel}
	F(V)=1\ \text{if and only if}\ h(V-W,V-W)=1
\end{equation}  (see the details in section $1.1.2$ of \cite{bao2004zermelo}). 

Given a Randers space $(M,F)$ and some point $p\in M$, the \textit{indicatrix of radius} $\tau$ at $p$ is the set $$\mathcal{I}_{_F}^\tau = \{V\in T_pM : F(V) = \tau\}. $$ Indeed, the indicatrix  $\mathcal{I}_{_F}^\tau$ is the collection of all end points of   vectors tangent to $M$ at $p$ and length $\tau$, and therefore it is a hypersurface in $T_pM$. We show the indicatrix of radius $1$ with  $\mathcal{I}_{_F}$. By the way, given the equation of Randers indicatrix, there are some techniques to determine the equation of its Randers metric (for instance, see \cite{bao2012introduction}, p. 13).  Given any Riemannian manifold $(M,h)$, one defines the Riemannian indicatrix of radius $\tau$, $\mathcal{I}_h^\tau$ similar to the Randers indicatrix $\mathcal{I}_{_F}^\tau$.

Given some Randers space $(M,F)$ and a piece-wise smooth curve ${\gamma}:[0,1]\longrightarrow M$, the length of ${\gamma}$ is defined as $L[{\gamma}]:=\int_{0}^{1}F({\gamma^\prime}(t))dt$. Similar to the Riemannian space, the distance from a point $p\in M$ to another point $q\in M$ in the Randers space $(M,F)$ is  
\begin{equation}\label{dis}
	d(p,q):=\inf_{{\gamma}}\int_{0}^{1}F({\gamma^\prime}(t))dt,
\end{equation}
where the infimum is taken over all piece-wise smooth curves ${\gamma}:[0,1] \longrightarrow M$ joining $p$ to $q$. A smooth curve in a Randers manifold is called a \textit{geodesic} (shortly $F$-geodesic) if it is locally the shortest time path connecting any two nearby points on this curve.
Given a compact subset $A\subset M$, we define the \textit{Randers distance function} $\rho:M\too\RR$ with $\rho(p)=d(A,p)$. One proves that $\rho$ is locally Lipschitz continuous  and therefore it is  differentiable almost everywhere \cite{shen2001lectures}.

Given some smooth vector field $W$ on $M$, the flow of $W$  is the smooth map $\varphi:(-\epsilon,\epsilon)\times M \longrightarrow M$ such that for all $p\in M$, $\varphi^p(t):=\varphi(t,p)$ is an integral curve of $W$, that is $\frac{d\varphi^p}{dt}(t)|_{t=0}=W(p)$ \cite{lee2018introduction}. 
A vector field $W$ on a Riemannian space is called \textit{Killing} if and only if its flow is an isometry of $(M,h)$. One can also say that $W$ is \textit{Killing} if and only if $\mathcal{L}_Wh=0$, where $\mathcal{L}$ is the Lie derivative.
\begin{lemma}\label{col}\cite{robles2005geodesics}
	Assume that $(M,h)$ is a Riemannian manifold. Given a unitary Riemannian geodesic ($h$-geodesic) ${\gamma_{_h}} : (-\epsilon,\epsilon) \longrightarrow M$ and a \textit{Killing} vector field $W$, the
	unitary $F$-geodesics are ${\gamma_{_F}}(t) = \varphi(t,{\gamma_{_h}}(t))$, where $\varphi : (-\epsilon,\epsilon) \times U \longrightarrow M$ is the flow of $W$ through ${\gamma_{_h}}(t)$.
\end{lemma}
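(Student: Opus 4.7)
My plan is to establish the lemma in two stages: first I would verify that $\gamma_F$ has unit $F$-speed pointwise, and then I would argue that $\gamma_F$ is locally $F$-length minimizing, hence an $F$-geodesic.

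For the unit-speed step, I would differentiate $\gamma_F(t)=\varphi(t,\gamma_h(t))$ via the chain rule. Since $\partial_t\varphi(t,p)=W(\varphi(t,p))$, this gives
\[
\gamma_F'(t) = W(\gamma_F(t)) + (d\varphi_t)_{\gamma_h(t)}\bigl(\gamma_h'(t)\bigr).
\]
Because $W$ is Killing, each flow map $\varphi_t$ is an isometry of $(M,h)$, so $h\bigl((d\varphi_t)\gamma_h',(d\varphi_t)\gamma_h'\bigr)=h(\gamma_h',\gamma_h')=1$. Setting $V=\gamma_F'(t)$, this reads $h(V-W,V-W)=1$, and Eq.~(\ref{rel}) immediately yields $F(V)=1$.

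For the geodesic property I would exploit the Zermelo navigation interpretation. Given any unit-$F$-speed curve $\sigma:[0,L]\to M$ starting at $p=\gamma_F(0)$, define the de-drifted curve $\tilde\sigma(t):=\varphi(-t,\sigma(t))$. An analogous chain-rule computation, using that the flow carries $W$ to itself and that $\varphi_{-t}$ is an $h$-isometry, shows that $\tilde\sigma$ has unit $h$-speed and starts at $p$. This sets up a parameter-preserving bijection between unit-$F$-speed and unit-$h$-speed curves issuing from $p$, under which $\gamma_F$ corresponds to $\gamma_h$. To conclude local $F$-minimization of $\gamma_F$, I would fix a small $T$ and a competitor $\sigma$ of $F$-length $L$ from $\gamma_F(0)$ to $\gamma_F(T)$; its de-drifted version is a unit-$h$-speed curve from $\gamma_h(0)$ to $\varphi(T-L,\gamma_h(T))$ of $h$-length $L$, and I would deduce $L\geq T$ by working in a totally normal $h$-neighborhood of $p$.

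The main obstacle I anticipate is the endpoint subtlety in this last step: the pulled-back target $\varphi(T-L,\gamma_h(T))$ depends on the competitor length $L$, which blocks a naive invocation of the minimizing property of $\gamma_h$. I would handle this either by combining smoothness of $\varphi$ with the inverse function theorem to rule out $L<T$ for sufficiently small $T$, or alternatively by bypassing minimization and verifying that $\gamma_F$ satisfies the Randers geodesic equation in Zermelo form by a direct coordinate computation, using the Killing equation $\mathcal{L}_Wh=0$ to cancel the additional terms produced by the $h$-geodesic equation for $\gamma_h$.
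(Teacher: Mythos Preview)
The paper does not prove this lemma: it is quoted verbatim from \cite{robles2005geodesics} and no argument is supplied, so there is no in-paper proof to compare against. Your unit-speed step is correct and is precisely the computation the paper itself uses later (see the proof of Theorem~\ref{main.th.2}, item~$(i)$, where the identity $\gamma_F'(0)=W(p)+d\varphi_p\gamma_h'(0)$ appears) together with Eq.~(\ref{rel}).

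On the minimization step, the endpoint issue you flag is genuine, but it closes more cheaply than either of your proposed fixes. Since $h(W,W)<1$, on a small compact neighborhood one has $\sup|W|_h=c<1$. If a unit-$F$-speed competitor $\sigma$ had length $L<T$, your de-drifted curve $\tilde\sigma$ would give $d_h\bigl(p,\varphi(T-L,\gamma_h(T))\bigr)\le L$; but integrating $|W|_h$ along the flow line shows $d_h\bigl(\gamma_h(T),\varphi(T-L,\gamma_h(T))\bigr)\le c(T-L)$, so by the triangle inequality $d_h\bigl(p,\varphi(T-L,\gamma_h(T))\bigr)\ge T-c(T-L)>L$, a contradiction. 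This makes your first route go through without the inverse function theorem. Your second alternative---verifying the Randers geodesic equation directly using $\mathcal{L}_Wh=0$---is essentially the approach taken in the cited reference.
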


Given some Randers space $(M,F)$ and a submanifold $A\subset M$, we say that a vector $V$ is \textit{ $F$-orthogonal to $A$}, and write $V\underset{F}{\perp}A$, if for every vector $U$ tangent to $A$ one has $g_V(V,U)=0$, where 	\begin{equation*}\label{fun.form}
	g_{V}(V,U):= \frac{1}{2}\bigg( \frac{\partial^2}{\partial t \partial s}F^2(V+tV +sU)\bigg)_{s=t=0}.
\end{equation*}	 Similarly, given the Riemannian manifold $(M,h)$, a vector $V$ is $h$-orthogonal to $A$, i.e. $V\underset{h}{\perp}A$, if for every vector $U$ tangent to $A$ one has $h(V,U)=0$. The following corollary states the relation between $F$-orthogonality and $h$-orthogonality. 

\begin{corollary}\cite{dehkordi2018finsler}\label{orto}
	Given a Randers manifold $(M,F)$, assume that $(h;W)$ is the Zermelo data associated with it. Then, for any two non-zero vectors $U$ and $V$ tangent to $M$,  $U\underset{F}{\perp}V$ if and only if $h(U,\frac{V}{F(V)}-W)=0$.
\end{corollary}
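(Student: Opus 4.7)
The plan is to combine two ingredients: the standard identification of $F$-orthogonality with tangency to the $F$-indicatrix, and the explicit description of the Randers indicatrix furnished by equation (\ref{rel}).

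First I would unpack the defining formula for $g_V(V,U)$. Using that $F^2$ is positively homogeneous of degree two, the reparametrisation $V + tV + sU = (1+t)\!\left(V + \tfrac{s}{1+t}U\right)$ gives
\begin{equation*}
F^2(V + tV + sU) = (1+t)^2\, F^2\!\left(V + \tfrac{s}{1+t}U\right).
\end{equation*}
Differentiating once in $s$ and then in $t$, and evaluating at $s=t=0$, the factor $(1+t)^2$ collapses against the chain-rule factor $1/(1+t)$ and yields $g_V(V,U) = F(V)\, dF_V(U)$. Since $V\neq 0$ forces $F(V)>0$, the condition $U\underset{F}{\perp}V$ is equivalent to $dF_V(U) = 0$, i.e.\ to $U$ lying in the tangent space at $V$ of the indicatrix $\mathcal{I}_{_F}^{\tau}$ of radius $\tau := F(V)$.

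Second I would apply (\ref{rel}). By homogeneity, $F(X)=\tau$ iff $F(X/\tau)=1$ iff $h(X/\tau - W,\, X/\tau - W)=1$, which rearranges to $h(X-\tau W,\, X-\tau W) = \tau^2$. Hence the Randers indicatrix of radius $\tau$ in $T_pM$ is precisely the $h$-sphere of radius $\tau$ centered at $\tau W$. Since $V$ sits on this $h$-sphere by the choice of $\tau$, its tangent space there is the $h$-orthogonal complement of the $h$-radial vector $V-\tau W$. Consequently $U\underset{F}{\perp}V$ iff $h(V-\tau W,\, U)=0$, and rescaling by $\tau = F(V)>0$ in the first slot gives the stated identity $h(U,\, V/F(V)-W)=0$.

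The only delicate step is the homogeneity bookkeeping in the first paragraph: carefully using that $F^2$ has degree two to reduce $g_V(V,\cdot)$ to $F(V)\, dF_V(\cdot)$ despite the somewhat unusual form $V+tV+sU$ used in the definition of the fundamental tensor. Everything afterwards is the well-known fact that tangency to a regular level set of a $C^1$ function is the vanishing of its differential, coupled with the geometric identification of the $F$-indicatrix as an $h$-sphere provided by (\ref{rel}).
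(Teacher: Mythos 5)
Your argument is correct, but note that there is no in-paper proof to compare it with: Corollary \ref{orto} is imported from \cite{dehkordi2018finsler} and used as a black box. Your route --- reducing $g_V(V,U)$ to $F(V)\,dF_V(U)$ by positive homogeneity, so that orthogonality with reference vector $V$ becomes tangency to the indicatrix $\mathcal{I}_{_F}^{\tau}$ at $V$ with $\tau=F(V)$, and then using relation (\ref{rel}) to recognize that indicatrix as the $h$-sphere of radius $\tau$ centred at $\tau W$ --- is a clean, self-contained proof, and it is essentially the infinitesimal counterpart of the paper's Lemma \ref{indic}, so it meshes well with the toolkit actually developed here. Two small points are worth making explicit. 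First, the tangency step needs the level set $\{F=\tau\}$ to be regular at $V$; this is immediate from Euler's relation $dF_V(V)=F(V)>0$, but it is precisely what licenses the identification of $\ker dF_V$ with the tangent hyperplane of the $h$-sphere, i.e.\ the $h$-orthogonal complement of $V-\tau W$, so state it. Second, with the paper's literal convention (the subscripted, or reference, vector in $g$ is the one written on the left of $\underset{F}{\perp}$), the symbol $U\underset{F}{\perp}V$ would read $g_U(U,V)=0$; what you prove is $g_V(V,U)=0 \iff h\big(U,\tfrac{V}{F(V)}-W\big)=0$, which is the version that is actually true and is the one invoked later in the paper (e.g.\ in the proof of item $(ii)$ of Theorem \ref{main.th.1}, where the geodesic velocity $V$ serves as the reference vector against the front $A$). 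Since $g$-orthogonality is not symmetric for Randers metrics, flagging which reading you prove is not pedantry but part of the correctness of the statement.
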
	

\subsection{Huygens' principle}

Assume that $P$ is a source which emits waves. Given any time $t>0$, we consider the collection of all points to which the wave reaches at time $t$. This collection is called the \textit{wavefront} at time $t$ \cite{arnol2013mathematical}. In the case that $P$ is a single point, the wavefront is called the \textit{spherical wavefront}. Given a wavefront $B$, assume that each point on $B$ acts as a new source that emits spherical wavefronts. At any time later, a surface tangent to all of the spherical wavefronts is called the \textit{envelope} of $B$. By the \textit{wave ray} it means the shortest time path that connects any point of $B$  to the wavefront at any time later.  We recall the \textit{Huygens' Theorem} as follows:

\begin{theorem}\cite{arnol2013mathematical}
	Let $\phi_p(t)$ be the wavefront of the point $p$ after time $t$. For every point $q$ of this wavefront, consider the wavefront after time $s$, i.e. $\phi_q(s)$. Then, the wavefront of point $p$ after time $s+t$, $\phi_p(s+t)$, will be the envelope of wavefronts $\phi_q(s)$, for $q\in\phi_p(t)$.
\end{theorem}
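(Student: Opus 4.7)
The plan is to translate the statement into a metric-geometric assertion about the Randers distance function $d$, and then verify the two defining properties of an envelope---passing through a common point and sharing the tangent hyperplane there---using the triangle inequality together with the $F$-orthogonality relation between wave rays and wavefronts.

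First, I would identify the wavefront $\phi_p(t)$ with the forward Randers geodesic sphere
\[
S_F(p,t) := \{x \in M : d(p,x) = t\},
\]
which is justified because wave rays are, by definition, the forward time-minimizing paths leaving $p$, and $\phi_p(t)$ consists of the endpoints of such paths traversed in time $t$. Under this identification the claim becomes: $S_F(p,s+t)$ is the envelope of the family $\{S_F(q,s) : q\in S_F(p,t)\}$.

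Next, I would show that every $r\in S_F(p,s+t)$ lies on at least one $S_F(q,s)$ of the family and that the two surfaces are tangent at $r$. To produce $q$, pick a minimizing $F$-geodesic $\gamma$ from $p$ to $r$ (it exists because the Hypotheses of the paper keep us away from cut loci and singularities), parametrize it by $F$-arc length, and set $q:=\gamma(t)$. Then $d(p,q)=t$ and $d(q,r)=s$, so $r\in S_F(q,s)$. Moreover, for any other $r'\in S_F(q,s)$ the triangle inequality gives $d(p,r')\leq d(p,q)+d(q,r')=t+s$, so no point of $S_F(q,s)$ lies strictly beyond $S_F(p,s+t)$; this rules out transversal crossing and forces tangential contact at $r$. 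Conversely, a point of the envelope arises as a limit of intersection points of nearby spheres $S_F(q,s)$ and $S_F(q',s)$ in the family and therefore has $d(p,\cdot)=s+t$, placing it in $S_F(p,s+t)$. For the tangency itself, I would invoke Corollary \ref{orto}: the velocity $\gamma'(s+t)$ is $F$-orthogonal to $S_F(p,s+t)$ at $r$, while the velocity of the sub-geodesic $\gamma|_{[t,s+t]}$ is $F$-orthogonal to $S_F(q,s)$ at $r$; these velocity vectors coincide, so the two spheres share the same $F$-orthogonal direction and hence the same tangent hyperplane.

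The main obstacle will be the non-reversibility of the Randers metric, since $d(p,q)\neq d(q,p)$ in general: forward balls, forward geodesics, and the direction-sensitive form of $F$-orthogonality from Corollary \ref{orto} must all be used consistently, and careful attention must be paid to which endpoint plays the role of "source" in each appearance of the triangle inequality. A companion subtlety is guaranteeing that the minimizing geodesic from $p$ to $r$ really does pass through a well-defined point at forward $F$-distance exactly $t$; this follows from reparametrization by $F$-arc length together with the standing hypothesis that no cut locus has formed before time $s+t$.
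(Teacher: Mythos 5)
The paper offers no proof of this statement to compare against: it is quoted as a classical theorem from \cite{arnol2013mathematical} and used as a black box, the author's own contribution in this direction being Proposition~\ref{propag.fire} (imported from \cite{dehkordi2019huygens}), which recasts the principle in terms of the distance function. Judged on its own, your reconstruction is the standard metric-geometric argument and is essentially sound, and it is in the same circle of ideas the paper relies on later: identify $\phi_p(t)$ with the forward geodesic sphere, put $q=\gamma(t)$ on a minimizing $F$-geodesic from $p$ to $r\in S_F(p,s+t)$, and use the forward triangle inequality both to get $d(q,r)=s$ and to confine every $S_F(q,s)$ inside the closed forward ball of radius $s+t$, so that contact at $r$ must be tangential; under the paper's definition of envelope (``a surface tangent to all of the spherical wavefronts'') this already gives the conclusion, and your tangency argument via the common velocity vector and Corollary~\ref{orto} matches the orthogonality statement in Proposition~\ref{propag.fire}. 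Two cautions. First, the equality $d(q,r)=s$ should be justified by noting that a sub-segment of a minimizing geodesic is itself minimizing (or by the reverse estimate $d(q,r)\geq d(p,r)-d(p,q)$); as written you simply assert it. Second, your converse step --- that a limit of intersection points of nearby spheres automatically satisfies $d(p,\cdot)=s+t$ --- overreaches: nearby spheres of radius $s$ centred on the front generically intersect near a second, inner characteristic branch as well (at distance $|t-s|$ in the reversible picture), so the forward front $S_F(p,s+t)$ is only the outer envelope. This does not damage the theorem in the form the paper states and uses it, but the claim as phrased is not literally true and should either be dropped (it is not needed for the paper's definition of envelope) or restricted to the outward branch singled out by the forward distance.
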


We recall the following result from \cite{dehkordi2019huygens} which is used throughout this work several times. By the way, since a Randers metric is a particular case of a Finsler metric, any result on Finsler is valid for Randers, as well. It is assumed that, in a Finsler space $(M,F)$, a wildfire is spreading and sweeping some area $U\subset M$ in the interval of time from $t = s>0$ to $t = r$. It
is also assumed that $U$ is a smooth manifold and $d$ is the Finsler distance function.
\begin{proposition}\label{propag.fire} \cite{dehkordi2019huygens}
	Let $\rho:M\too \RR$ with $\rho(p)=d(A,p)$ where $A$ is a compact subset of $M$ and $\rho(U)=[s,r]$, where $0<s<r$. Suppose that $\rho^{-1}(s)$ is the wavefront at time $0$ and there is no cut loci in $\rho^{-1}(s,r)$. Then,
	for each $t\in [s,r]$, $\rho^{-1}(t)$ is the wavefront at time $t-s$ and 
	the Huygens' principle is satisfied by the wavefronts $$\{\rho^{-1}(t)\}_{t\in [s,r]}.$$
	Furthermore, the track of each fire particle is a geodesic of $F$ and also it is orthogonal to each wavefront $\rho^{-1}(t)$ at time $t-s$.
\end{proposition}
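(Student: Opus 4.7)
The plan is to identify the wavefronts with the level sets of $\rho$. Since $F(V)$ measures the infinitesimal time taken by the wave to move with velocity $V$, the Randers distance $d(A,q)$ equals the travel time from $A$ to $q$ along an optimal path. Declaring $\rho^{-1}(s)$ the wavefront at time $0$ only shifts the clock: for any $q$ with $\rho(q)=t$, the wave reaches $q$ after an additional time $t-s$, so $\rho^{-1}(t)$ is the wavefront at time $t-s$. The no-cut-loci assumption on $\rho^{-1}(s,r)$ ensures that $\rho$ is smooth on this region and that every $q\in\rho^{-1}(t)$ is joined to $A$ by a unique minimizing $F$-geodesic.

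Next I would invoke the Finsler Gauss lemma to pin down the wave rays. Wherever $\rho$ is smooth, the minimizing $F$-geodesics emanating from $A$ foliate $\rho^{-1}(s,r)$, and each such geodesic, taken with unit $F$-speed, is $F$-orthogonal to every level set $\rho^{-1}(t)$ it crosses. A fire particle, by definition, follows the shortest-time path from its initial position on $\rho^{-1}(s)$ to the successive wavefronts; this path is therefore one of those minimizing $F$-geodesics, which immediately yields both the geodesic property and the $F$-orthogonality to each $\rho^{-1}(t)$ at time $t-s$.

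For Huygens' principle, fix $t_0$ and $t_0+\tau$ in $[s,r]$ and set $S_p(\tau):=\{x\in M : d(p,x)=\tau\}$. Given $q\in\rho^{-1}(t_0+\tau)$, the minimizing geodesic from $A$ to $q$ crosses $\rho^{-1}(t_0)$ at a point $p$, and its restriction from $p$ to $q$ has $F$-length $\tau$, so $q\in S_p(\tau)$. Conversely, for any $p\in\rho^{-1}(t_0)$ and any $x\in S_p(\tau)$, the triangle inequality yields $\rho(x)\leq t_0+\tau$, so $S_p(\tau)\subset\{\rho\leq t_0+\tau\}$ and touches $\rho^{-1}(t_0+\tau)$ at $q$ from the inside. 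Since both hypersurfaces are $F$-orthogonal to the common geodesic tangent at $q$, their tangent hyperplanes at $q$ coincide, giving the tangency that makes $\rho^{-1}(t_0+\tau)$ the envelope of the family $\{S_p(\tau)\}_{p\in\rho^{-1}(t_0)}$. The main obstacle I anticipate is the rigour of this last step: because Finsler orthogonality is direction-dependent through the form $g_V$ (cf.\ Corollary \ref{orto}), I must apply a Gauss-type identity showing that both $\{\rho=c\}$ and $S_p(\tau)$ are $F$-orthogonal to the same minimizing geodesic direction at their common point, and then use the no-cut-locus hypothesis to obtain the smoothness of $\rho$ and of $S_p(\tau)$ at $q$ needed to promote this pointwise tangency into a genuine envelope statement.
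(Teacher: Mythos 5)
The paper never proves Proposition~\ref{propag.fire}: it is imported verbatim from \cite{dehkordi2019huygens}, and the only local argument is the limiting remark used to extend it to the corollary with $s=0$. So there is no in-paper proof to compare against; judged on its own terms, your route is the standard (and, in spirit, the cited) one: wavefronts are level sets of the distance function $\rho$, the Finsler Gauss lemma off the cut locus gives that unit-speed minimizing geodesics --- which the fire trajectories are, by the shortest-time (Fermat) modelling assumption --- are $g_V$-orthogonal to each level set, and Huygens' principle comes from the triangle inequality together with tangency of the small sphere $S_p(\tau)$ and the outer front at the point where the minimizer through $p$ meets it. That skeleton is correct.

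Two steps you flag but do not carry out deserve to be written down, since they are the only places the hypotheses are really used. First, the ``clock shift'' requires the identity $d(\rho^{-1}(s),q)=\rho(q)-s$ for $q$ in the region swept: one inequality is the triangle inequality $\rho(q)\le s+d(\rho^{-1}(s),q)$, and the reverse follows because any minimizing geodesic from $A$ to $q$ must cross $\rho^{-1}(s)$ (continuity of $\rho$ along it), leaving a subsegment of length $\rho(q)-s$ from that crossing point to $q$. Second, your envelope argument needs $S_p(\tau)$ to be a smooth hypersurface near the touching point $q$, i.e.\ $q$ must not be a cut point of $p$; this is where the no-cut-locus hypothesis on $\rho^{-1}(s,r)$ enters twice, once to make $\rho$ smooth with the level sets genuine hypersurfaces, and once more to note that for $t_0+\tau<r$ the minimizer from $A$ through $p$ and $q$ still minimizes slightly past $q$, so its subsegment from $p$ extends as a minimizer beyond $q$ and $q$ lies before the cut point of $p$. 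With these two observations made explicit, and the tangency argued exactly as you say --- both hypersurfaces have tangent space equal to the $g_V$-orthogonal complement of the common geodesic velocity $V$ at $q$, cf.\ Corollary~\ref{orto} --- your proposal is a complete proof of the proposition.
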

This theorem says that once one finds the Finsler metric associated with a wildfire spreading in a smooth manifold $M$, by using the distance function $\rho$, the model of propagation is provided. As a result from Proposition~\ref{propag.fire}, one has the following corollary.
\begin{corollary}
	Let $\rho:M\too \RR$ with $\rho(p)=d(A,p)$ where $A$ is a compact subset of $M$ and $\rho(U)=[0,r]$. Suppose that there is no cut loci in $\rho^{-1}(0,r)$ and $\rho^{-1}(0)$ is the wavefront at time $0$. Then,
	for each $t\in [0,r]$, $\rho^{-1}(t)$ is the wavefront at time $t$ and 
	the Huygens' principle is satisfied by the wavefronts $$\{\rho^{-1}(t)\}_{t\in [0,r]}.$$
	Furthermore, the track of each fire particle is a geodesic of $F$ and also it is orthogonal to each wavefront $\rho^{-1}(t)$ at time $t$.
\end{corollary}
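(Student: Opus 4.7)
Plan: The corollary is the $s=0$ specialization of Proposition~\ref{propag.fire}, so the natural strategy is to reduce to that proposition rather than re-argue from scratch. The idea is to apply Proposition~\ref{propag.fire} on every sub-interval $[s,r]$ with $0<s<r$, and then sweep $s\downarrow 0$ to capture the missing piece at $t=0$.

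First I would fix an arbitrary $t\in(0,r]$ and pick $s\in(0,t)$. Since $\rho^{-1}(s)\subset\rho^{-1}(0,r)$ and the hypothesis rules out cut loci inside $\rho^{-1}(0,r)$, Proposition~\ref{propag.fire} applies on $[s,r]$ and gives three outputs: (i) $\rho^{-1}(t)$ is the wavefront at time $t-s$ relative to the initial wavefront $\rho^{-1}(s)$; (ii) the Huygens envelope principle holds throughout $\{\rho^{-1}(\tau)\}_{\tau\in[s,r]}$; and (iii) each fire particle trajectory restricted to $\rho^{-1}[s,r]$ is an $F$-geodesic that meets every level set $\rho^{-1}(\tau)$ $F$-orthogonally.

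Next I would promote these local-in-$s$ statements to the full interval $[0,r]$. For the wavefront identification, the key observation is that by the Huygens principle itself, evolving the initial wavefront $\rho^{-1}(0)$ for time $t$ is the same as evolving it for time $s$ and then for time $t-s$. Using the triangle inequality for the Randers distance (or equivalently the fact that $\rho(p)=d(A,p)$ is Lipschitz and therefore $\rho^{-1}(s)$ is exactly the envelope of the spherical wavefronts of radius $s$ emanating from $\rho^{-1}(0)$), one checks that evolving $\rho^{-1}(0)$ by time $s$ yields precisely $\rho^{-1}(s)$; combining this with (i) gives that $\rho^{-1}(t)$ is the wavefront at time $t$ relative to $\rho^{-1}(0)$. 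For the geodesic/orthogonality claim at a point $p$ with $\rho(p)>0$, one simply chooses $s<\rho(p)$ and reads off (iii). The boundary case $t=0$ is immediate because $\rho^{-1}(0)=A$ is the initial wavefront by hypothesis.

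The main obstacle I anticipate is the matching step in the previous paragraph: showing that $\rho^{-1}(s)$ really coincides with the Huygens envelope produced from $\rho^{-1}(0)$ after time $s$, so that the two descriptions of the wavefront at time $t$ agree. This is essentially the content of the triangle inequality $d(A,q)\le d(A,\rho^{-1}(s))+d(\rho^{-1}(s),q)$ combined with the no-cut-locus assumption, which guarantees that the infimum is realized along a minimizing $F$-geodesic hitting $\rho^{-1}(s)$ transversally; once this is established, the Huygens, wavefront, and orthogonality conclusions transfer from $[s,r]$ to $[0,r]$ by letting $s\downarrow 0$.
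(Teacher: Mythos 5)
Your proposal follows essentially the same route as the paper: the paper's own proof also reduces the corollary to Proposition~\ref{propag.fire} applied away from $t=0$ and then extends to the whole interval $[0,r]$ by a limiting argument as $s\downarrow 0$, justified only by the local Lipschitz continuity (almost-everywhere differentiability) of $\rho$. Your version merely spells out the matching step the paper leaves implicit in its one-line ``take the limit'' argument, namely that the front at time $s$ coincides with $\rho^{-1}(s)$ via the triangle inequality and the absence of cut loci, so the two proofs agree in substance.
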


\begin{proof}
	In fact, since $\rho$ is differentiable almost everywhere It is enough to take the limit of the function $\rho$ to extend the results of Proposition~\ref{propag.fire} to the whole interval $[0,r]$.
\end{proof}

\section{Modeling the propagation by using the Randers geometry}


Throughout this section, we consider the hypothesis stated in the Introduction and establish some results. We consider three different states for the wind:  constant, \textit{Killing}, and  smooth vector filed.


Here, it is shown that for modeling a propagation one first needs to find the equation of some ellipsoid - as several authors have used elliptic fields in the case of dimension $2$ \cite{anderson1982modelling}. From this ellipsoid, that is actually our indicatrix, we calculate the Riemannian metric, then the wave rays, and finally we provide the model.	
Before presenting the main results, we state and prove Lemma~\ref{indic} which says  that the Randers indicatrix of radius $\tau$ is translation of the Riemannian indicatrix of the same radius   by vector $\tau W$.
\begin{lemma}\label{indic}
	Given a Randers space $(M,F)$, let $(h;W)$ be the Zermelo data associated with $F$ and $\mathcal{I}_{_F}^\tau$ and $\mathcal{I}_{_h}^\tau$ the Randers and Riemannian indicatrices of radius $\tau$, respectively. Then,  $\mathcal{I}_{_F}^\tau=\mathcal{I}_{_h}^\tau+\tau W.$
\end{lemma}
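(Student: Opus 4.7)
The plan is to prove both set inclusions simultaneously via an iff chain, using the equivalence $F(V)=1 \iff h(V-W,V-W)=1$ from Eq.~(\ref{rel}) together with the positive homogeneity of $F$ and the 2-homogeneity of $h$.

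First I would pick an arbitrary tangent vector $V\in T_pM$ and rewrite $V\in\mathcal{I}_{F}^{\tau}$ as $F(V)=\tau$. Since $\tau>0$ and $F$ is positively 1-homogeneous, this is equivalent to $F(V/\tau)=1$. Applying Eq.~(\ref{rel}) to the vector $V/\tau$ yields
\begin{equation*}
h\!\left(\tfrac{V}{\tau}-W,\ \tfrac{V}{\tau}-W\right)=1.
\end{equation*}

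Next I would pull the scalar $1/\tau$ out of the Riemannian inner product. Writing $\tfrac{V}{\tau}-W=\tfrac{1}{\tau}(V-\tau W)$ and using bilinearity of $h$, the previous equation becomes $h(V-\tau W,V-\tau W)=\tau^{2}$, i.e.\ the $h$-norm of $V-\tau W$ equals $\tau$. This is precisely the statement that $V-\tau W\in\mathcal{I}_{h}^{\tau}$, or equivalently $V\in\mathcal{I}_{h}^{\tau}+\tau W$. Since every step in the chain is reversible (homogeneity and Eq.~(\ref{rel}) are both iff statements), the two sets coincide.

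I do not anticipate any real obstacle: the lemma is essentially a homogenization of Eq.~(\ref{rel}). The only place where one must be careful is keeping track of the scaling factor when pulling $1/\tau$ out of $h$, since $h$ is quadratic while $F$ is 1-homogeneous; this is what converts the radius $1$ statement into a radius $\tau$ statement with a translation by $\tau W$ (rather than just $W$).
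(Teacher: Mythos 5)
Your proof is correct and uses the same ingredients as the paper's: the 1-homogeneity of $F$, the quadratic scaling of $h$, and the equivalence (\ref{rel}); the paper merely packages these as the set identities $\mathcal{I}_{_F}^\tau=\tau\mathcal{I}_{_F}$, $\mathcal{I}_{_h}^\tau=\tau\mathcal{I}_{_h}$, and $\mathcal{I}_{_F}=\mathcal{I}_{_h}+W$ before composing them, whereas you run the same equivalences element-wise. No gap; your care with the factor $1/\tau$ (giving $h(V-\tau W,V-\tau W)=\tau^2$) is exactly where the translation by $\tau W$ rather than $W$ comes from.
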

\begin{proof}
	One has	$V \in \mathcal{I}_{_h}^\tau$ if and only if $|V|_{_h}=\tau$ if and only if $|\frac{V}{\tau}|_{_h}=1$ if and only if $\frac{V}{\tau}\in \mathcal{I}_{_h}$, where $|\,.\,|_{_h}=\sqrt{h(\,.,.\,)}$.  In other words, $\mathcal{I}_{_h}^\tau=\tau \mathcal{I}_{_h}$.  Similarly, one shows   $\mathcal{I}_{_F}^\tau=\tau\mathcal{I}_{_F}$.		Furthermore, according to the relation \ref{rel}, $F(V)=1$ if and only if $|V-W|_{_h}=1$, or in other words $V\in \mathcal{I}_{_F}$ if and only if $V-W\in \mathcal{I}_{_h}$ which leads to $\mathcal{I}_{_F}=\mathcal{I}_{_h}+W$. Finally, from these relations we have
	\begin{align*}
		\mathcal{I}_{_F}^\tau&=\tau\mathcal{I}_{_F}=\tau(\mathcal{I}_{_h}+W)=\tau\mathcal{I}_{_h}+\tau W\\
		&=\mathcal{I}_{_h}^\tau+\tau W,
	\end{align*}
	which is the desired relation.
\end{proof}

From this lemma, given a Randers indicatrix $\mathcal{I}_F$, one can find the Riemannian indicatrix $\mathcal{I}_h$, and vice versa. Consequently, assuming that $Q_h$ is  the quadratic equation of $\mathcal{I}_h$, the Riemannian metric  is  $h=\frac12 \Hessian Q_h$ and, by using Eq.~(\ref{Randers1}), one finds the Randers metric $F$.

\subsection{Constant wind}
In this section, for a constant wind $W$, we first find the wavefronts and wave rays of the propagation, and then the strategic paths and points. 

\subsubsection{Wavefronts and Wave rays}
\begin{theorem}\label{main.th.1}
	Assume that a wildfire is spreading  in some space $ M$ while the wind $W=(0,W_2,W_3)$ is blowing across $M$ and $A$ is the wavefront at time $0$. Then:
	\begin{itemize}
		\item [(i)] Given any point $p$ in $M$, the spherical wavefront of  radius $\tau$ and center $p$ is 
		\begin{align}
			Q_{_h}(\frac{u}{\tau},\frac{v}{\tau},\frac{w}{\tau})+\tau W+p,
		\end{align} 
		where $Q_h(u,v,w)$ is given by the Eq.~(\ref{el}).

		\item [(ii)] Given any point $p$ in $A$, the wave ray emanating from $p$ is the straight line given by ${\gamma_{_F}}(t)=p+tV$, where $V$ is a vector such that $|V-W|_{_h}=1$ and $V-W\underset{h}{\perp}A$, where $h=\verb*|Hess|\, Q$. 
		\item [(iii)] The wavefront at time $\tau$ is the set \[\{p+\tau V\ : \ p\in A, |V-W|_{_h}=1, V-W \underset{h}{\perp} A \}.\]

	\end{itemize}
\end{theorem}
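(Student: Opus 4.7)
The plan is to leverage the fact that a constant vector field $W$ on an open subset of $\mathbb{R}^n$ is a \emph{Killing} field for the constant-coefficient Riemannian metric $h=\tfrac{1}{2}\Hessian Q_h$, so that Lemma \ref{col} gives the $F$-geodesics in closed form. The flow of the constant field $W$ is simply the translation $\varphi(t,x)=x+tW$, and since $h$ has constant coefficients in the ambient Cartesian chart, its unit-speed geodesics are Euclidean straight lines $\gamma_h(t)=p+tU$ with $|U|_h=1$. Composing via Lemma \ref{col}, every unit-speed $F$-geodesic emanating from $p$ takes the form $\gamma_F(t)=p+t(U+W)$.

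For part (i), I would apply Lemma \ref{indic} fibrewise at $p$: the Randers indicatrix $\mathcal{I}_F^\tau$ equals $\mathcal{I}_h^\tau+\tau W$. The spherical wavefront of radius $\tau$ centered at $p$ is, by definition, the set of endpoints at time $\tau$ of unit-speed $F$-geodesics through $p$; by the geodesic formula above this set is $\{p+\tau U+\tau W:|U|_h=1\}=p+\tau W+\mathcal{I}_h^\tau$. The no-cut-locus hypothesis guarantees, via Proposition \ref{propag.fire}, that this coincides with the level set $\rho_p^{-1}(\tau)$ and is therefore the actual wavefront. Since $\mathcal{I}_h^\tau$ is the $\tau$-scaling of the Riemannian unit indicatrix, it is described by $Q_h(u/\tau,v/\tau,w/\tau)=1$, producing the stated expression.

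For part (ii), Proposition \ref{propag.fire} asserts that the wave ray issuing from $p\in A$ is an $F$-geodesic that is $F$-orthogonal to $A$. By the geodesic formula, this ray is $\gamma_F(t)=p+tV$ with $V=U+W$, $|U|_h=1$; rearranging gives $|V-W|_h=1$. To translate the orthogonality condition, I apply Corollary \ref{orto} to $V$ and any $U'\in T_pA$, which rewrites $V\underset{F}{\perp} U'$ as $h\bigl(V/F(V)-W,U'\bigr)=0$. Since $\gamma_F$ is unit-speed we have $F(V)=1$ (indeed $|V-W|_h=1$ is exactly Eq.~(\ref{rel})), so the condition collapses to $h(V-W,U')=0$, i.e.\ $V-W\underset{h}{\perp}A$.

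Part (iii) is then a bookkeeping step: Proposition \ref{propag.fire} identifies the wavefront at time $\tau$ with the union over $p\in A$ of the endpoints at time $\tau$ of the wave rays from $A$, and substituting $\gamma_F(\tau)=p+\tau V$ from (ii) yields the claimed set. The main delicate point I foresee is justifying cleanly that a constant vector field is genuinely Killing for this constant-coefficient $h$ and that the Randers $F$-ball at $p$ of radius $\tau$ coincides with the translated indicatrix $p+\mathcal{I}_F^\tau$ (both of which rely on constancy of $W$ together with the no-cut-locus hypothesis ensuring minimality of the straight-line geodesics); once these are verified, every other step is a direct substitution into Lemmas \ref{col} and \ref{indic}, Corollary \ref{orto}, and Proposition \ref{propag.fire}.
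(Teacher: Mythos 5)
Your treatment of items (ii) and (iii) is sound and, modulo packaging, equivalent to the paper's: where you invoke Lemma~\ref{col} (a constant $W$ is Killing for the constant-coefficient $h$, its flow is the translation $x\mapsto x+tW$, the $h$-geodesics are straight lines, hence $\gamma_{_F}(t)=p+t(U+W)$ with $|U|_{_h}=1$), the paper instead observes that under the constant-wind hypothesis $F$ is of Minkowski--Randers type, so its geodesics are straight lines, and then imposes unit speed and $F$-orthogonality through relation~(\ref{rel}) and Corollary~\ref{orto}, exactly as you do. Both routes rest on the same underlying fact (translations are isometries of the constant-coefficient structure), and yours has the mild advantage of producing the decomposition $V=U+W$, hence $|V-W|_{_h}=1$, in one stroke. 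Your use of Proposition~\ref{propag.fire} to identify the sphere/wavefront with the level set of the distance function, and of Corollary~\ref{orto} with $F(V)=1$ to collapse $F$-orthogonality to $h(V-W,\cdot)=0$, matches the paper.

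Item (i), however, is not fully proved as stated. The theorem claims the spherical wavefront is described by $Q_{_h}$ \emph{given by Eq.~(\ref{el})}, i.e.\ an ellipsoid with semi-axes $a,b,c$ rotated about the $x$-axis by a single angle $\alpha$. You use $Q_{_h}$ only as an abstract quadratic attached to $h$, so your argument establishes that the wavefront is $p+\tau W+\mathcal{I}_{_h}^\tau$ but never shows that $\mathcal{I}_{_h}$ has the specific form~(\ref{el}). The paper supplies this in two steps that are absent from your proposal: Lemma~\ref{ind-eli}, where the spectral theorem applied to the symmetric positive-definite constant matrix $\hbar$ diagonalizes $Q_{_h}$ and exhibits it as a rotated ellipsoid $(\frac{u'}{a})^2+(\frac{v'}{b})^2+(\frac{w'}{c})^2=1$; and the argument that, since the wind $W=(0,W_2,W_3)$ lies in the $yz$-plane, the fuel is uniform, and the heat rises along the $z$-axis, the rotation must be about the $x$-axis only, so the rotation matrix is $R_x(\alpha)$ of Eq.~(\ref{prot}) and $Q_{_h}$ takes the form~(\ref{el}) with parameters to be fitted from experimental data. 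Without some version of this step (the linear-algebra reduction plus the symmetry argument pinning down the rotation axis), the reference to Eq.~(\ref{el}) in item (i) is unjustified; the remaining steps of your proposal are correct substitutions into Lemma~\ref{indic}, Corollary~\ref{orto}, and Proposition~\ref{propag.fire}.
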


\begin{proof} 
	
	\textbf{$(i):$}	First of all, observe that we have the same conditions at all  points of $M$. Hence, the spherical wavefronts of the radius $\tau$ and centers at any points of $M$ are the same geometric objects, up to a translation. Consequently, the metric $F$ associated with the spherical spheres does not depend on the point, however it depends on the direction. In other words, $F$ is a Minkowski-Randers type. As a  consequence, the Randers geodesics are straight lines and  the space is of zero curvature. Therefore, the spherical wavefront of radius $\tau$ and center  $p$ coincides with  $\mathcal{I}^\tau_{_F}$, that is the indicatrix of radius $\tau$ and center $0_p\in T_pM$. Let us find the equation of $ \mathcal{I}^\tau_{_F}$. Assume that $h$ is the metric in  Zermelo data $(h;W)$ whose Zermelo's solution is $F$. By Lemma~\ref{indic}, $\mathcal{I}_h$ is a translation of $\mathcal{I}_F$ by $-W$. Therefore, let's find $\mathcal{I}_h$, in order to find $\mathcal{I}_{_F}$. Assume that $Q_{_h}$ is the quadratic equation of $\mathcal{I}_{_h}$, that is $$Q_{_h}(u,v,w)=[u,v,w]\hbar[u,v,w]^T=1,$$ where $\hbar$ is the matrix of the metric $h$ and $B^T$ is the transpose of the matrix $B$. Observe that $\hbar$ is a symmetric and positive-definite matrix, since it is the matrix of the metric $h$.  Also, the fact that $Q_{_h}$ at each point has the same equation implies that  $\hbar$ has constant components.   We  show through the following lemma that $Q_{_h}$ is a rotated ellipsoid.
	
	\begin{lemma}\label{ind-eli}
		Let  $Q_{_h}(u,v,w)$ be a quadratic equation such that $$Q_{_h}(u,v,w)=[u,v,w]\hbar[u,v,w]^t=1,$$ where $\hbar$ is a symmetric and positive-definite matrix with constant components. Then, $Q_{_h}$ is the equation of a rotated ellipsoid.
	\end{lemma}
	\begin{proof}
		Since $\hbar$ is a symmetric and positive-definite matrix,  by Spectral theorem, there exists some orthogonal matrix $P$ (i.e. $PP^T=P^TP=I$) such that $\hbar=P^T\mathcal{D}P$, where $\mathcal{D}=diag(\lambda_1,\lambda_2,\lambda_3)$ is a diagonal matrix. Therefore, one can write
		
		\begin{equation*}
			\begin{array}{ll}
				1=\begin{bmatrix} u \\ v \\ w  \end{bmatrix}^TP^T\mathcal{D}P\begin{bmatrix} u \\ v \\ w \end{bmatrix} &= (P\begin{bmatrix} u \\ v \\ w \\ \end{bmatrix})^T\mathcal{D}P\begin{bmatrix} u \\ v \\ w \\ \end{bmatrix}\\
				&\\
				&= \begin{bmatrix} u' \\ v' \\ w' \\ \end{bmatrix}^T\mathcal{D}\begin{bmatrix} u' \\ v' \\ w' \\ \end{bmatrix}
				=\lambda_1{u^\prime}^2+ \lambda_2{v^\prime}^2+\lambda_3 w{^\prime}^2,
			\end{array}
		\end{equation*}
		where we put $P\begin{bmatrix} u \\ v \\ w \\ \end{bmatrix}=\begin{bmatrix} u' \\ v' \\ w' \\ \end{bmatrix}$ and the last equality is because $\mathcal{D}$ is a diagonal matrix.   Furthermore, since $\hbar$ is positive definite, the elements on the principle diagonal of $\mathcal{D}$ are positive real numbers and we can write
		
		\begin{equation}\label{ell}
			1= \lambda_1{u^\prime}^2+ \lambda_2{v^\prime}^2+\lambda_3 w{^\prime}^2=(\frac{u^\prime}{a})^2+ (\frac{v^\prime}{b})^2+(\frac{w^\prime}{c})^2,
		\end{equation} where $a=\frac{1}{\sqrt{\lambda_1}}$, $b=\frac{1}{\sqrt{\lambda_2}}$, $c=\frac{1}{\sqrt{\lambda_3}}$. Eq.~(\ref{ell}) is the equation of an ellipsoid in the vector space $T_pM$ with  basis $\mathbb{B}=\{P\frac{\partial}{\partial x},P\frac{\partial}{\partial y},P\frac{\partial}{\partial z}\}$, which is the rotation of the canonical basis. Therefore, it is deduced that $Q_h$ is some rotated ellipsoid in the system with basis of $\{\frac{\partial}{\partial x},\frac{\partial}{\partial y},\frac{\partial}{\partial z}\}$. 
	\end{proof}

	From Lemma~\ref{ind-eli}, the quadratic equation of spherical wavefronts  with respect to the Riemannian metric $h$,  $Q_h(u,v,w)$, is some rotated ellipsoid given by Eq.~(\ref{ell}). 		Let us figure out what is the angle of  rotation. Observe that, according to the following  facts: the heat goes toward above, i.e. $z$-axis, the fuel has been distributed uniformly through $M$, and $W$ is in the $yz$-plane; it is deduced that $Q_h$ is rotated around $x$-axis. That is the matrix of rotation is 
	
	\begin{align} 	\label{prot}	
		P=R_x(\alpha)=
		\left(\begin{matrix}
			1 &           0 &            0\\
			0 & \cos \alpha & -\sin \alpha\\
			0 & \sin \alpha & \cos \alpha
		\end{matrix}\right).\end{align}
	Here, we used a right-handed coordinate system and a right-handed rotation through an angle $\alpha$ around $x$-axis.   To summarize we have: 
	\begin{align}\label{el}
		Q_{_h}(u,v,w)=(\frac{u}{a})^2+ (\frac{v\cos\alpha-w\sin\alpha}{b})^2+ (\frac{v\sin\alpha+w\cos\alpha}{c})^2=1,
	\end{align}
	were $a,b$, $c$ and $\alpha$ are constant real numbers and will be determined from the experimental data.			Finally, the Riemannian indicatrix is  
	\begin{equation*}
		\mathcal{I}_{_h}=\{(u,v,w)\ : Q_{_h}(u,v,w)=1\}
	\end{equation*} 
	and therefore, by Lemma~\ref{indic}, one  obtains 
	\begin{equation}\label{ind.f}
		\mathcal{I}_{_F}^\tau=\{(u,v,w) \in \mathbb{R}^3\ : Q_{_h}(\frac{u}{\tau},\frac{v}{\tau},\frac{w}{\tau})=1\}+\tau W.
	\end{equation}
	Eq.~(\ref{ind.f}), that is the equation of  spherical wavefront at time $\tau$ and center $p$, is  translation of the rotated ellipsoid given by Eq.~(\ref{el}), in which the constant numbers are determined from the experimental data, and so the proof of item $(i)$ is complete.

	\vspace{3mm}  \textbf{$(ii)$:} Given some point $p\in A$, by Proposition~\ref{propag.fire} and  since geodesics of Minkowski spaces are straight lines,  the wave rays emanating from the point $p$  are unit speed straight lines ${\gamma_{_F}}(t)=p+tV$, provided that vectors $V$ are $F$-orthogonal to $A$. Therefore, by Corollary \ref{orto} and relation \ref{rel}, we must have $V-W\underset{h}{\perp}A$  and $|V-W|_{_h}=1$. 
	
	\vspace{3mm} \textbf{$(iii)$:} By Proposition~\ref{propag.fire}, the wavefront at time $\tau$ is $\rho^{-1}(\tau)$, where $\rho:M\to\mathbb{R}$, $\rho(p)=d(A,p)$.  Given $q\in \rho^{-1}(\tau)$, assume that ${\gamma_{_F}}$ is a unit speed geodesic (wave ray) that minimizes the distance from $A$ to $q$. That is it emanates from some point $p\in A$ and reaches to $q$ at time $\tau$, or in other words, $$\tau=d(A,q)=d(p,q)=d({\gamma_{_F}}(0),{\gamma_{_F}}(\tau)).$$ Note that, by item $(ii)$, ${\gamma_{_F}}$ is the straight line ${\gamma_{_F}}(t)=p+tV$ with $|V-W|_{_h}=1$ and $V-W\underset{h}{\perp}A$. It concludes that \[\rho^{-1}(\tau)=\{q={\gamma_{_F}}(\tau)=p+\tau V\ : \ p\in A, |V-W|_{_h}=1, V-W \underset{h}{\perp} A \}.\]

\end{proof}


%
\begin{corollary}\label{huy.cor}
Assume that a wildfire is spreading  in some space $ M$ while the wind $W=(0,W_2,W_3)$ is blowing across $M$ and $\mathcal{VF}_{\tau_1}$ is the wavefront at time $\tau_1\geq 0$. 	Then the wavefront at time $\tau_2$ is the envelope of ellipsoids $Q_{_h}(\frac{u}{\tau},\frac{v}{\tau},\frac{w}{\tau})+\tau W+p,$ where  $\tau=\tau_2-\tau_1$, $p\in \mathcal{VF}_{\tau_1}$, and $Q_{_h}(u,v,w)$ is given by Eq.~\ref{el}.
\end{corollary}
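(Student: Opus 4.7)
The plan is to derive the corollary as an essentially immediate consequence of Huygens' principle combined with Theorem~\ref{main.th.1}(i). First I would observe that all the heavy lifting, namely identifying the shape of the spherical wavefront centered at an arbitrary point of $M$ and of arbitrary radius $\tau$, has already been done in Theorem~\ref{main.th.1}(i): under the constant-wind hypothesis, the Randers metric $F$ is of Minkowski-Randers type, so space is translation-invariant, and the spherical wavefront of radius $\tau$ centered at $p$ is precisely the translated ellipsoid $Q_{_h}(\tfrac{u}{\tau},\tfrac{v}{\tau},\tfrac{w}{\tau})+\tau W+p$.

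Next I would invoke the Huygens' Theorem recalled in Section~2, in the form tailored to the wildfire setting via Proposition~\ref{propag.fire}. According to that principle, if $\mathcal{VF}_{\tau_1}$ is the wavefront at time $\tau_1$, then each point $p\in\mathcal{VF}_{\tau_1}$ acts as a secondary source; after an additional time $\tau:=\tau_2-\tau_1$, the wave emanating from $p$ reaches precisely the spherical wavefront of radius $\tau$ centered at $p$. The wavefront at time $\tau_2$ is then the envelope of the family of these spherical wavefronts, taken over all $p\in\mathcal{VF}_{\tau_1}$.

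Finally, I would substitute the explicit description from Theorem~\ref{main.th.1}(i) into the envelope description: the secondary spherical wavefront associated with $p\in\mathcal{VF}_{\tau_1}$ is exactly $Q_{_h}(\tfrac{u}{\tau},\tfrac{v}{\tau},\tfrac{w}{\tau})+\tau W+p$, with $Q_{_h}$ given by Eq.~(\ref{el}). Taking the envelope of this family as $p$ varies over $\mathcal{VF}_{\tau_1}$ yields exactly the formula in the statement, which concludes the argument.

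There is really no technical obstacle here; the only thing one has to be careful about is the bookkeeping of the time increment (using $\tau = \tau_2 - \tau_1$ rather than $\tau_2$ itself) and ensuring that Proposition~\ref{propag.fire} applies on the interval $[\tau_1,\tau_2]$, which requires invoking the blanket hypothesis from the Introduction that the fire is stopped before cut loci or singularities develop. Once this is noted, the corollary follows by direct substitution.
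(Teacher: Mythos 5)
Your argument is correct and follows essentially the same route as the paper's own proof, which likewise deduces the corollary directly from Proposition~\ref{propag.fire} (the Huygens' principle for the family of wavefronts of the Randers distance function) together with item $(i)$ of Theorem~\ref{main.th.1} giving the secondary spherical wavefronts as the translated ellipsoids. Your added remarks on the time increment $\tau=\tau_2-\tau_1$ and the no-cut-locus hypothesis are sensible bookkeeping but do not change the substance of the argument.
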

\begin{proof} 
	We consider the Randers distance function $\rho:M\to\mathbb{R},$ $\rho(p)=d(\mathcal{VF}_{0},p)$, where $\mathcal{VF}_0$ is the wavefront at time $0$.
Therefore, the proof is a direct consequence of   Proposition~\ref{propag.fire}  and item $(i)$ of Theorem~\ref{main.th.1}.
\end{proof}

It is notable that there may not be any relations between the angle $\alpha$ and  angles that the wind makes with the axes. We  can only say that the stronger the wind, the closer the angle $\alpha$ to the angle that the wind makes with the $z$-axis. In fact,  since the heat goes towards above, the indicatrix is an ellipsoid with the major axis along the  $z$-axis, before the influence of the wind. Then, the wind makes it  rotate and translate toward the direction of $W$. Consequently, for the stronger wind we have the smaller deviation between the angle of rotation and the direction of $W$. 

\subsubsection{Strategic paths and points in the case of  constant wind}

Here, we determine the equations of strategic paths for two different situations. The situations are as follows. 
\begin{itemize}
	\item [1-] All the points of $M$ have the same priority from the fire fighting point of view. 
		
	\hspace{-10mm}Therefore, the strategic path is the path along which the fire engulfs more area. 
	\item[2-] Some special area or point has higher priority and  the objective is protecting it against the wildfire.
\end{itemize}  Hence, for a given point, the strategic path is the path through which the fire particles reach to this point at some time $\tau$. Or, for a given area $B$, the strategic path is the path through which the fire meets $\partial B$ for the first time at time $\tau$, where $\partial B$ is the frontier of $B$. By the way, depending on the case, we may have more than one strategic path. 
\begin{prop}\label{stra.1}
	Assume that a wildfire is spreading  in the space $ M$, the constant wind $W=(0,W_2,W_3)$ is blowing across $M$, and $A$ is the wavefront at time $0$. Then:
	
	\begin{itemize}
		\item [(i)] In the case that all points of $M$ have the same priority, the strategic path is $\gamma_{_F}(t)=p+tV$ provided that $p\in A$ and the Euclidean norm of $V$, i.e. $v_1^2+v_2^2+v_3^2$, is the maximum  among all the vectors $V$ satisfying $ |V-W|_{_h}=1$ and $ V-W \underset{h}{\perp} A $. 
		
		\item [(ii)]			Given any point $q$, the strategic path that reaches to $q$ is $\gamma_{_F}(t)=q +(t-\tau)V$ where $|V-W|_{_h}=1,$  $ V-W \underset{h}{\perp} A$, and $\tau$ is the time of  the wavefront to which $q$ belongs.
		
		\item [(iii)] Given any area $B$, the strategic path which reaches to $B$ is $\gamma_{_F}(t)=q +(t-\tau)V$ such that $ |V-W|_{_h}=1, V-W \underset{h}{\perp} A $, and   $\tau$ is the time of the wavefront that intersects $ B$ for the first time and $q$ is the point of intersection.
	\end{itemize}
\end{prop}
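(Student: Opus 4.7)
The plan is to build everything on Theorem~\ref{main.th.1}, which already characterises the wave rays in the constant-wind case as straight lines $\gamma_{_F}(t)=p+tV$ with $p\in A$, $|V-W|_{_h}=1$ and $V-W\underset{h}{\perp}A$, together with Proposition~\ref{propag.fire}, which identifies the wavefront at time $\tau$ with the level set $\rho^{-1}(\tau)$ of the Randers distance function $\rho(\,\cdot\,)=d(A,\,\cdot\,)$. All three items in Proposition~\ref{stra.1} are then essentially reparameterisations of the wave rays already produced by Theorem~\ref{main.th.1}, selected according to which strategic criterion is being used.

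For item $(i)$, I would first translate the informal criterion ``the fire engulfs more area'' into a quantitative one: since every wave ray is Finsler-unit-speed and the fuel is assumed uniformly distributed, the Euclidean volume swept out along a given wave ray per unit Finsler time is controlled by the Euclidean speed $\|\gamma_{_F}'(t)\|_{\text{eucl}}=\sqrt{v_1^2+v_2^2+v_3^2}$ of that ray. Hence, among the admissible velocities $V$ (those with $|V-W|_{_h}=1$ and $V-W\underset{h}{\perp}A$) the strategic direction is exactly the one maximising this Euclidean norm, and the claimed formula $\gamma_{_F}(t)=p+tV$ drops out immediately from Theorem~\ref{main.th.1}$(ii)$.

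Item $(ii)$ would follow by the following short argument. If $q\in M$ is the point to be protected, let $\tau=\rho(q)$, so by Proposition~\ref{propag.fire} $q\in\rho^{-1}(\tau)$ and the wave ray that carries fire to $q$ in minimal time is a unit-speed $F$-geodesic emanating from some $p\in A$ with $\gamma_{_F}(\tau)=q$. By Theorem~\ref{main.th.1}$(ii)$ this geodesic is the straight line $\gamma_{_F}(t)=p+tV$ with $V$ subject to the stated orthogonality and unit-length conditions, and substituting $p=q-\tau V$ gives the proposed parametrisation $\gamma_{_F}(t)=q+(t-\tau)V$. Item $(iii)$ is obtained in the same way after replacing the single point by the set $B$: set $\tau=\inf\{t\ge 0:\rho^{-1}(t)\cap B\ne\emptyset\}$, pick any $q$ in the (non-empty, by compactness-type/continuity arguments on $\rho$) intersection $\rho^{-1}(\tau)\cap\partial B$, and apply $(ii)$ at that $q$.

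The main obstacle is not computational but interpretive: giving a clean, defensible meaning to ``engulfs more area'' in $(i)$. The uniform-fuel and constant-wind hypotheses make the translation to ``maximise the Euclidean speed of a wave ray'' natural, but it should be spelled out that, because all wave rays are Finsler-unit-speed straight lines, comparing them after the same elapsed time $t$ is legitimate and reduces the optimisation to a finite-dimensional problem over the ellipsoid $\{V:|V-W|_{_h}=1,\ V-W\underset{h}{\perp}A\}$. Once that reduction is accepted, items $(ii)$ and $(iii)$ are purely bookkeeping on top of Theorem~\ref{main.th.1} and Proposition~\ref{propag.fire}.
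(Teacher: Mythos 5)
Your proposal is correct and follows essentially the same route as the paper: item $(i)$ is Theorem~\ref{main.th.1}$(ii)$ plus the interpretation of ``engulfs more area'' as maximal Euclidean speed, and items $(ii)$--$(iii)$ are the same reparametrisation $p=q-\tau V$ of the wave ray reaching $q$ at the first-arrival time $\tau$, with $(iii)$ reduced to $(ii)$ at the first intersection point with $B$. The only cosmetic difference is that the paper also notes uniqueness of the ray through $q$ (wave rays being integral curves of the gradient of the distance function), which your argument implicitly uses but does not state.
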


\begin{proof}
	\textbf{$(i)$:} First, since the strategic path is some wave ray of the fire that emanates from $A$, by the item $(ii)$ of Theorem~\ref{main.th.1}, it  must be $\gamma_{_F}(t)=p+tV$ such that $p\in A, |V-W|_{_h}=1,$ and $  V-W \underset{h}{\perp} A $. Next, the fact that the strategic path is the wave ray through which the fire engulfs more region implies that the Euclidean velocity of such a wave ray is the maximum among all these paths $\gamma_{_F}$. In other words, $v_1^2+v_2^2+v_3^2$ is the maximum among all such vectors $V$ satisfying in above conditions.

	\vspace{3mm} \textbf{$(ii)$: } For a given point $q$, assuming that $\tau$ is the time when the wavefront meets $q$ for the first time, there exists a unique wave ray that emanates from some point belonging to $A$ and reaches to $q$ at time $\tau$ (up to a reparameterization). Because the wave rays are  integral curves of the gradient of distance function \cite{dehkordi2019huygens}. Hence, from item $(ii)$ of Theorem~\ref{main.th.1}, the path of this wave ray must be a unit speed straight line that emanates form $A$, $F$-orthogonally, and reaches to the wavefront at time $\tau$. Therefore, it is not difficult to show that $\gamma_{_F}(t)=q +(t-\tau)V$ where $|V-W|_{_h}=1,$ and $ V-W \underset{h}{\perp} A$.
	
	\vspace{3mm}  \textbf{$(iii)$: } First we find the wavefront that meets $ B$ for the first time and then the intersection of this wavefront  and $ B$. Assuming that $q $ is the point of intersection, the rest of the proof is similar to that of item $(ii)$.
	
\end{proof}

\subsection{Wind as Some Smooth Vector Field}
Here, through Theorem~\ref{main.th.2}, we provide the model for the case that the wind is a \textit{ Killing vector field}. The positive aspect of this kind of vector field is that there exists a direct relation between the wave rays of propagation and geodesics of the  Riemannian metric $h$. Therefore, in order to find the wave rays, one just needs to solve the system of $h$-geodesic equations. To see the system of geodesic equations  of a Riemannian metric, see Chapter $6$ of \cite{lee2018introduction}. In Theorem~\ref{mainthm3}, we provide the propagation model for the case that the wind is some smooth vector field, not necessarily \textit{Killing}.
\begin{theorem}\label{main.th.2}
	
	Assume that a wildfire is spreading  in the space $M$, the wind $W$, which is a \textit{Killing} vector filed, is blowing across $M$  and $A$ is the wavefront at time $0$. Then:
	\begin{itemize}
		\item [(i)] Given any point $p$ in $A$, the wave rays emanating from $p$ are ${\gamma_{_F}}(t):=\varphi(t,{\gamma_{_h}}(t))$, where $\varphi$ is the flow of $W$ and $\gamma_{_h}$ is the $h$-geodesic such that ${\gamma_{_h}}(0)=p$, $|{\gamma_{_h}}^\prime(t)|=1$, and $d\varphi_{_p}{\gamma'_{_h}}(0)\underset{h}{\perp}A$. 
		\item[(ii)] The spherical wavefront at time $\tau$ and center of some point $p\in M$ is the set
		
		$\{\varphi(\tau,{\gamma_{_h}}(\tau)): {\gamma}_{_h}\text{ is the unit speed $h$-geodesic that } {\gamma}_{_h}(0)=p\}$.
		\item[(iii)] The wavefront at time $\tau$ is the set $$\{\varphi(\tau,{\gamma_{_h}}(\tau)): \gamma_{_h} \text{ is an $h$-geodesic that } d\varphi_{_p}{\gamma_{_h}}^\prime\underset{h}{\perp} A\,\text{and}\, |{\gamma_{_h}}^\prime|=1\}.$$
	\end{itemize} 	
	
\end{theorem}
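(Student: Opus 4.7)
The plan is to prove the three items by combining three earlier results: Proposition~\ref{propag.fire} (fire particles travel along $F$-geodesics orthogonal to $A$ and the wavefront at time $\tau$ is $\rho^{-1}(\tau)$), Lemma~\ref{col} (unit speed $F$-geodesics in a Killing Randers space have the form $\varphi(t,\gamma_{_h}(t))$), and Corollary~\ref{orto} (which translates $F$-orthogonality into an $h$-orthogonality condition involving $W$). Item $(i)$ contains the main technical content, and items $(ii)$ and $(iii)$ will follow essentially by specialization and aggregation.

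For item $(i)$, I would start with an arbitrary wave ray emanating from $p\in A$. By Proposition~\ref{propag.fire}, it is a unit speed $F$-geodesic that is $F$-orthogonal to $A$ at $p$. By Lemma~\ref{col}, every such $F$-geodesic has the form $\gamma_{_F}(t)=\varphi(t,\gamma_{_h}(t))$ for some unit speed $h$-geodesic $\gamma_{_h}$ with $\gamma_{_h}(0)=p$. Computing the initial velocity from the product rule at $t=0$ (using $\varphi(0,\cdot)=\mathrm{id}$ so the spatial derivative is the identity, and $\partial_t\varphi(0,p)=W(p)$) gives $\gamma_{_F}'(0)=\gamma_{_h}'(0)+W(p)$. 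Since $F(\gamma_{_F}'(0))=1$, Corollary~\ref{orto} shows that $F$-orthogonality of $\gamma_{_F}'(0)$ to any $U$ tangent to $A$ is equivalent to $h(U,\gamma_{_F}'(0)-W(p))=0$, that is, $h(U,\gamma_{_h}'(0))=0$. This is precisely the stated condition $d\varphi_{_p}\gamma_{_h}'(0)\underset{h}{\perp}A$ at $t=0$. Conversely, any unit speed $h$-geodesic satisfying this condition yields, via $\gamma_{_F}(t)=\varphi(t,\gamma_{_h}(t))$, a unit speed $F$-geodesic that is $F$-orthogonal to $A$, and hence a wave ray.

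For item $(ii)$, the spherical wavefront at time $\tau$ centered at $p$ is obtained by taking $A=\{p\}$ in Proposition~\ref{propag.fire}. Since every tangent vector is vacuously orthogonal to a point, the orthogonality condition from item $(i)$ disappears, and the wave rays issuing from $p$ are precisely $\gamma_{_F}(t)=\varphi(t,\gamma_{_h}(t))$ for every unit speed $h$-geodesic $\gamma_{_h}$ with $\gamma_{_h}(0)=p$. Evaluating at $t=\tau$ and collecting yields the claimed set.

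For item $(iii)$, I invoke Proposition~\ref{propag.fire} again: the wavefront at time $\tau$ equals $\rho^{-1}(\tau)$ for $\rho(q)=d(A,q)$, and every $q\in\rho^{-1}(\tau)$ is the endpoint at time $\tau$ of a minimizing unit speed $F$-geodesic emanating from some $p\in A$ and $F$-orthogonally leaving $A$. By item $(i)$, that geodesic is $\varphi(t,\gamma_{_h}(t))$ with $\gamma_{_h}$ a unit speed $h$-geodesic satisfying the $h$-orthogonality condition, so $q=\varphi(\tau,\gamma_{_h}(\tau))$. Conversely, item $(i)$ shows every such $\varphi(\tau,\gamma_{_h}(\tau))$ lies on $\rho^{-1}(\tau)$. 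The main obstacle is the careful computation of $\gamma_{_F}'(0)$ and its translation through Corollary~\ref{orto} to produce the $h$-orthogonality condition in item $(i)$; once that is settled, items $(ii)$ and $(iii)$ are immediate specializations.
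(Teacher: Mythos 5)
Your proposal is correct and follows essentially the same route as the paper: Proposition~\ref{propag.fire} to identify wave rays with unit speed $F$-geodesics $F$-orthogonal to $A$, Lemma~\ref{col} to write them as $\varphi(t,\gamma_{_h}(t))$, the chain-rule computation $\gamma_{_F}'(0)=W(p)+\gamma_{_h}'(0)$, and Corollary~\ref{orto} to convert $F$-orthogonality into the $h$-orthogonality condition, with items $(ii)$ and $(iii)$ obtained as specializations via the distance function. The only material in the paper's proof you omit is the preliminary modeling discussion identifying the indicatrix as a translated rotated ellipsoid and hence the metric $h$, which is setup rather than part of the logical argument for the three items.
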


\begin{proof}
	Before proceeding with the proofs of items $(i), (ii)$, and $(iii)$, let's see what  the indicatrix $Q_h$ and Riemannian metric $h$ are. Given $p\in M$, one is faced with Theorem~\ref{main.th.1} on $T_pM$ with constant wind $W(p)$ on it. Although, since the direction of $W$ depends on $p$, one cannot assume that $W$ is always in $yz$-plane. That is the indicatrix (as a subset of $T_pM$) is some translation of  an ellipsoid rotated along $x$-, $y$-, and $z$- axes. 	Therefore, the indicatrix  is the translation of  the rotated ellipsoid  
	\begin{align}
		Q_{_h}(p;(u,v,w))=(\frac{u^\prime}{a(p)})^2+ (\frac{v^\prime}{b(p)})^2+ (\frac{w^\prime}{c(p)})^2=1,
		\label{ellipp}	\end{align} by the vector $W(p)$, where $\begin{bmatrix} u' \\ v' \\ w' \\ \end{bmatrix}=P(p)\begin{bmatrix} u \\ v \\ w \\ \end{bmatrix}$,   $P(p)=R_z({\theta(p)})R_y({\beta(p)})R_x({\alpha(p)})$, and 
		\begin{align*} 		
		R_x(\alpha(p))=
		\left(\begin{matrix}
			1 &           0 &            0\\
			0 & \cos \alpha(p) & -\sin \alpha(p)\\
			0 & \sin \alpha(p) & \cos \alpha(p)
		\end{matrix}\right),
		R_y(\beta(p))= 
		\left(\begin{matrix}
			\cos \beta(p) & 0&\sin \beta(p) \\
			0 &            1  & 0\\
			-\sin \beta(p) & 0&\cos \beta(p)  
		\end{matrix}\right),\end{align*}
	\begin{align*}
		R_z(\theta(p))=
		\left(\begin{matrix}
			\cos \theta(p) & -\sin \theta(p) & 0\\
			\sin \theta(p) & \cos \theta(p)  & 0\\
			0 &         0  & 1
		\end{matrix}\right).
	\end{align*} 
Here $a,b,c,\alpha, \beta,$ and $\theta$ are smooth functions and at each point $p$ they are determined from the experimental data. By the way, since $P(p)\in SO(3)$, $\mathbb{B}=\{P\frac{\partial}{\partial x},P\frac{\partial}{\partial y},P\frac{\partial}{\partial z}\}$ is a rotation of the canonical basis $\{\frac{\partial}{\partial x},\frac{\partial}{\partial y},\frac{\partial}{\partial z}\}$ and therefore it is an orthogonal basis for the space. 
	It is not difficult to show that the matrix of Riemannian metric is   $\hbar(p)=(P^T\mathcal{D}P)(p)$, where $\mathcal{D}(p)=diag(\lambda_1(p),\lambda_2(p),\lambda_3(p))$.


	\vspace{3mm}\textbf{ $(i)$:}	First, by Proposition~\ref{propag.fire}, the wave rays are unitary $F$-geodesics that are $F$-orthogonal to $A$. To find unitary $F$-geodesics, by Lemma~\ref{col} it is enough to find  unitary $h$-geodesics.  Therefore, for any $F$-geodesic $\gamma_{_F}(t)$ and its corresponding $h$-geodesic ${\gamma_{_h}}(t)$, from this property of flow that $\varphi(0,p)=p$,   one has ${\gamma_{_F}}(0)=\varphi(0,{\gamma_{_h}}(0))={\gamma_{_h}}(0)$; that means both ${\gamma_{_F}}$ and $\gamma_{_h}$ have the same initial point. Hence, given $p\in A$, to find the wave rays   emanating from $p$, it suffices to find the unitary $h$-geodesics $\gamma_{_h}(t)$  emanating from $p$ provided that $d\varphi_{_p}{\gamma'_{_h}}(0)\underset{h}{\perp}A$. The last relation satisfies the condition that $\gamma_{_F}(t)$ must be $F$-orthogonal to $A$. Because by Corollary \ref{orto}, ${\gamma_{_F}}^\prime(0)\underset{F}{\perp}A$ if and only if  ${\gamma_{_F}}^\prime(0)-W\underset{h}{\perp}A$. Furthermore, from the chain rule we have $${\gamma_{_F}}^\prime(0)=d\varphi(0,{\gamma_{_h}}(0)) +d\varphi(0,{\gamma_{_h}}(0))  {\gamma_{_h}}^\prime(0)=W(p)+d\varphi_{_p}{\gamma_{_h}}^\prime(0).$$ 
	

	\vspace{3mm}	\textbf{ $(ii)$:} By Proposition~\ref{propag.fire},  the spherical wavefront of  radius $\tau$ and center $p$ is the set $\rho^{-1}(\tau)$, where $\rho(.):=d(p,.):M\to\mathbb{R}$. 
	Given $q\in \rho^{-1}(\tau)$, assume that  ${\gamma_{_F}}(t)$ is the equation of a wave ray from $p$ to $q$. Hence, by Proposition~\ref{propag.fire}, ${\gamma_{_F}}(t)$  is a unitary $F$-geodesic that minimizes the distance from $p$ to $q$, or in other words $$d(p,q)=d(\gamma_{_F}(0),\gamma_{_F}(\tau))=\tau.$$ Therefore, $$\rho^{-1}(\tau)=\{{\gamma_{_F}}(\tau) : {\gamma_{_F}}(t) \text{ is an $F$-geodesic that } d(p,{\gamma_{_F}}(\tau))=\tau, F({\gamma_{_F}}^\prime)=1, {\gamma_{_F}}(0)=p \}.$$ Now,  by item $(i)$, ${\gamma_{_F}}(t)=\varphi(t,{\gamma_{_h}}(t))$ where ${\gamma_{_h}}(t)$ is a unit speed $h$-geodesic such that ${\gamma_{_h}}(0)=p$, concluding the proof of item $(ii)$.
	
	
	\vspace{3mm}\textbf{ $(iii)$:} The wavefront at time $\tau$, by Proposition~\ref{propag.fire}, is the set $\rho^{-1}(\tau)$, where $\rho(.)=d(A,.):M\to\mathbb{R}$. In other words, $$\rho^{-1}(\tau)=\{q: \ d(A,q)=\tau\}.$$ Suppose ${\gamma_{_F}}(t)$ is the equation of some unit speed $F$-geodesic that minimizes the distance from $A$ to $q\in \rho^{-1}(\tau)$. That is ${\gamma_{_F}}(0)\in A$, $q={\gamma_{_F}}(\tau)$, $\gamma_{_F}'(0)\underset{F}{\perp}A$ and $$d(A,q)=d({\gamma_{_F}}(0),{\gamma_{_F}}(\tau))=\tau.$$ Moreover, by item $(i)$, ${\gamma_{_F}}(t)=\varphi(t,{\gamma_{_h}}(t))$ where ${\gamma_{_h}}(t)$ is a unitary $h$-geodesic such that  $d\varphi_{_p}{\gamma'_{_h}}(0)\underset{h}{\perp}A$.  Therefore, we have  $$\rho^{-1}(\tau)=\{\varphi(\tau,{\gamma_{_h}}(\tau)):\, \gamma_{_h}(t) \text{ is an }  \text{h-geodesic},\, |{\gamma_{_h}}^\prime|_{_h}=1, d\varphi_{_p}{\gamma_{_h}}^\prime\underset{h}{\perp}A\,\},$$ which concludes the proof.
	
\end{proof}

\begin{rem}
	We have a corollary similar to Corollary \ref{huy.cor}. That is to model a wildfire propagation in some space $M$ under the presence of the wind $W$ which is \textit{Killing}, one can find the spherical wavefronts,  centers at different points of some wavefront, by item $(ii)$ of Theorem \ref{main.th.2} and then the envelope.
\end{rem}

\subsubsection{Strategic paths and points in the case of  Killing vector field}
Through Proposition~\ref{stra.2} below, we verify the strategic paths and points in the case  of  wind being a \textit{Killing} vector field.
\begin{prop}\label{stra.2}
	Assume that some wildfire is spreading  in the space $M$ and the wind $W$, that is a \textit{Killing} vector filed, is blowing throughout $M$  and $A$ is the wavefront at time $0$. Then:	
	
	\begin{itemize}
		\item [(i)] In the case that all  points of $M$ have the same priority, given any time $\tau$, the strategic path until time $\tau$ is  $ \varphi(t,{\gamma_{_h}}(t))$, where $\gamma_{_h}$ is the $h$-geodesic such that  $|{\gamma_{_h}}(\tau)-{\gamma_{_h}}(0)|$ is maximum among all  $h$-geodesics that  $|{\gamma_{_h}}^\prime(t)|_{_h}=1$ and $d\varphi_{_p}{\gamma'_{_h}}\underset{h}{\perp}A$.
		
		\item [(ii)]			Given any point $q$ belonging to the wavefront at time $\tau$, the strategic path which reaches to $q$ is  $ \varphi(t,{\gamma_{_h}}(t))$, where   $\gamma_{_h}$ is the  $h$-geodesic such that $|{\gamma_{_h}}^\prime(t)|=1$,  $d\varphi_{_p}{\gamma'_{_h}}\underset{h}{\perp}A$ and $\varphi(\tau,{\gamma_{_h}}(\tau))=q$.
		
		\item [(iii)] Given any area $B$,  the strategic path that reaches to it is $\varphi(t,{\gamma_{_h}}(t))$  where  $\gamma_{_h}$ is the unitary $h$-geodesic such that  $d\varphi_{_p}{\gamma'_{_h}}(0)\underset{h}{\perp}A$ and $\varphi(\tau,{\gamma_{_h}}(\tau))=q$. Here,   $\tau$ is the time of the wavefront that intersects $ B$ for the first time and $q$ is the point of intersection.
	\end{itemize}
	
	%
	%
	
\end{prop}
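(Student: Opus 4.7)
The plan is to reduce each of the three items to Theorem~\ref{main.th.2}, in strict parallel with the proof of Proposition~\ref{stra.1}. The unifying observation is that a strategic path is simply a distinguished wave ray of the propagation, selected by an additional optimality or boundary condition, and Theorem~\ref{main.th.2}(i) already provides the parametrization: every wave ray emanating from $A$ has the form $\gamma_{_F}(t)=\varphi(t,\gamma_{_h}(t))$ for some unit-speed $h$-geodesic $\gamma_{_h}$ with $\gamma_{_h}(0)\in A$ and $d\varphi_{_p}\gamma_{_h}^\prime(0)\underset{h}{\perp}A$. Hence for each item it will suffice to impose the relevant extra constraint on the admissible $\gamma_{_h}$.

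For (i), I would mimic the reasoning used to prove Proposition~\ref{stra.1}(i). Since every point of $M$ has equal priority, the strategic path is the wave ray along which the fire covers the most ground in time $\tau$. The extent of ground covered is measured, exactly as in the constant-wind case, by the Euclidean displacement of the underlying curve between $t=0$ and $t=\tau$, which for a \textit{Killing} wind reduces to a condition on the auxiliary $h$-geodesic: because $\varphi(0,\cdot)=\mathrm{id}$ and the flow is determined by $W$ independently of the choice of $\gamma_{_h}$, maximizing the displacement of $\gamma_{_F}$ over $[0,\tau]$ amounts to maximizing $|\gamma_{_h}(\tau)-\gamma_{_h}(0)|$ over the admissible family, which is precisely the stated criterion.

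For (ii), fix a point $q$ on the wavefront at time $\tau$. By Proposition~\ref{propag.fire}, wave rays are the integral curves of the gradient of the Randers distance function, so under the standing no-cut-loci hypothesis exactly one wave ray reaches $q$ at time $\tau$ (up to reparametrization). Invoking Theorem~\ref{main.th.2}(i) once more, this unique ray has the form $\varphi(t,\gamma_{_h}(t))$ for a unit-speed $h$-geodesic with $d\varphi_{_p}\gamma_{_h}^\prime(0)\underset{h}{\perp}A$, and the terminal condition $\varphi(\tau,\gamma_{_h}(\tau))=q$ pins down this $\gamma_{_h}$ uniquely. Item (iii) then follows from (ii): continuity of $\rho(\,\cdot\,)=d(A,\,\cdot\,)$ yields a smallest $\tau$ for which the wavefront $\rho^{-1}(\tau)$ meets $B$, and any point $q\in\rho^{-1}(\tau)\cap B$ reduces item (iii) to a direct application of (ii).

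The only real subtlety is item (i), where the clause ``the fire engulfs more region'' must first be pinned down to the Euclidean displacement, just as was tacitly done in Proposition~\ref{stra.1}(i); after that, the \textit{Killing} property of $W$ makes the passage from $\gamma_{_F}$ to $\gamma_{_h}$ transparent, since $\varphi$ contributes a common, data-independent factor to every admissible ray. Items (ii) and (iii) are essentially bookkeeping, their only nontrivial input being uniqueness of the minimizing wave ray arriving at a prescribed point, which is secured by the no-cut-loci standing hypothesis of the paper.
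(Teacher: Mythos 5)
Your proposal follows essentially the same route as the paper: each strategic path is first identified as a wave ray of the form $\varphi(t,\gamma_{_h}(t))$ via Theorem~\ref{main.th.2}(i), and then the extra selection condition is imposed on the admissible $h$-geodesics (maximal displacement for item (i), the terminal condition $\varphi(\tau,\gamma_{_h}(\tau))=q$ for item (ii), and reduction to the first-contact time of the wavefronts with $B$ for item (iii), exactly as in Proposition~\ref{stra.1}). The paper is no more rigorous than you are in item (i) about equating ``engulfs more region'' with maximizing $|\gamma_{_h}(\tau)-\gamma_{_h}(0)|$, so nothing essential is missing from your argument.
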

\begin{proof}
	To prove item $(i)$, each strategic path must be an $F$-geodesic. Therefore, by item $(i)$ of Theorem~\ref{main.th.2}, the strategic path must be  $\varphi(t,{\gamma_{_h}}(t))$, where $\varphi$ is the flow of $W$ and $\gamma_{_h}$ is the unitary $h$-geodesic such that $d\varphi_{_p}{\gamma'_{_h}}(0)\underset{h}{\perp}A$. However, as the wave rays might be some curves, one can not claim that there exists a unique strategic path that remains valid for any time $t$. Indeed, a wave ray might be a strategic path just before some time $\tau$ and after this time one has to choose another wave ray as the strategic path. Since the strategic path is the path through which the fire engulfs more area,  the Euclidean length of the strategic path is the maximum. That is $|{\gamma_{_h}}(\tau)-{\gamma_{_h}}(0)|$ must be maximum among all wave rays joining $A$ to the wavefront at time $\tau$, closing the proof of item $(i)$. 
	
	\vspace{3mm} 	To prove item $(ii)$, similar to item $(i)$, the strategic path is $\varphi(t,{\gamma_{_h}}(t))$, where $\gamma_{_h}$ is the unitary $h$-geodesic such that $d\varphi_{_p}{\gamma'_{_h}}(0)\underset{h}{\perp}A$. However, among all such wave rays, we have to find the one that passes through $q$ at time $\tau$. In other words, the wave ray that satisfies the condition $\varphi(\tau,{\gamma_{_h}}(\tau))=q$ is what we need.
	
	\vspace{3mm} 	The proof of item $(iii)$, is similar to that of item $(iii)$ of Proposition~\ref{stra.1}.
\end{proof}

In the next theorem, we present the model for wildfire propagation when the wind $W$   is a smooth vector field, not necessarily  \textit{Killing}.
\begin{theorem}\label{mainthm3}
	Assume that some wildfire is spreading in a space $M$ across which the wind $W$, that is a smooth vector field, is blowing. Suppose that $A$ is the wavefront at time $0$. Then:
	\begin{itemize}
		\item [(i)] The wave rays are unit speed $F$-geodesics that are $F$-orthogonal to $A$, where $F$ is the Randers metric whose indicatrix at each point $p\in M$ is translation of the rotated ellipsoid, given by Eq.~(\ref{ellipp}), by  $W(p)$.
		\item[(ii)] Given some point $p\in M$, the spherical wavefront of radius $\tau$ and  center $p$ is:
		
		$\{{\gamma_{_F}}(\tau): {\gamma_{_F}}(\tau) \text{ is the unit speed $F$-geodesic that }{\gamma_{_F}}(0)=p\}$.
		\item[(iii)] The wavefront at time $\tau$ is the set: $$\{{\gamma_{_F}}(\tau): {\gamma_{_F}}(\tau) \text{ is the unit speed $F$-geodesic that }{\gamma_{_F}}^\prime(0)\underset{F}{\perp}A\}.$$
		
	\end{itemize}
\end{theorem}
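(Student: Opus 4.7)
The plan is to reduce all three items to direct applications of Proposition~\ref{propag.fire} once the appropriate Randers metric $F$ has been identified, and to construct $F$ point by point from the same geometric data used in the proof of Theorem~\ref{main.th.2}. Because $W$ is now an arbitrary smooth vector field, we can no longer invoke Lemma~\ref{col} to convert $F$-geodesics into flow-images of $h$-geodesics, so the statement must be phrased directly in terms of $F$; this is exactly what items (i)--(iii) do.

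First, I would fix a point $p\in M$ and argue that on the tangent space $T_pM$ the situation is governed by the local Zermelo data $(h(p);W(p))$. The heat/uniform-fuel argument of Theorem~\ref{main.th.1}(i) applies pointwise to produce a Riemannian indicatrix $\mathcal{I}_h$ at $p$; by Lemma~\ref{ind-eli} this indicatrix is a rotated ellipsoid, and since there is no preferred plane containing $W(p)$ (unlike the constant-wind case where $W$ was in the $yz$-plane) I must allow rotations about all three axes, which yields precisely the rotation matrix $P(p)=R_z(\theta(p))R_y(\beta(p))R_x(\alpha(p))$ and the ellipsoid equation~(\ref{ellipp}). Smoothness of $W$ together with smoothness of the experimentally-determined parameters makes $a,b,c,\alpha,\beta,\theta$ smooth functions on $M$, so $\hbar(p)=(P^T\mathcal{D}P)(p)$ defines a genuine Riemannian metric $h$. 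Applying Lemma~\ref{indic} pointwise gives $\mathcal{I}_F=\mathcal{I}_h+W(p)$, and finally Eq.~(\ref{Randers1}) produces a Randers metric $F$ on $M$ whose indicatrix at each $p$ is the translate of the rotated ellipsoid~(\ref{ellipp}) by $W(p)$. This establishes the Randers metric referenced in item~(i).

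For item~(i) itself, once $F$ has been constructed, Proposition~\ref{propag.fire} applies verbatim: the track of each fire particle is an $F$-geodesic, and moreover it is $F$-orthogonal to every wavefront $\rho^{-1}(t)$, in particular to $A=\rho^{-1}(0)$. Reparametrizing by arc length in $F$ gives unit speed, concluding (i). For item~(ii), I would take the distance function $\rho(\cdot)=d(p,\cdot)$ and note that by Proposition~\ref{propag.fire} the spherical wavefront of radius $\tau$ centered at $p$ is the level set $\rho^{-1}(\tau)$. A point $q$ lies in $\rho^{-1}(\tau)$ iff there is a minimizing $F$-geodesic from $p$ to $q$ of $F$-length $\tau$; parametrizing it by $F$-arc length gives a unit speed $F$-geodesic $\gamma_{_F}$ with $\gamma_{_F}(0)=p$ and $\gamma_{_F}(\tau)=q$, which is the description in (ii). For item~(iii), I would replace $\rho(\cdot)$ by $d(A,\cdot)$ and repeat the same argument; the extra ingredient is that the minimizing geodesic from $A$ to $q$ must meet $A$ $F$-orthogonally, which is exactly the first variation condition already used implicitly in Proposition~\ref{propag.fire} and invoked in the analogous items of Theorems~\ref{main.th.1} and~\ref{main.th.2}.

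The main obstacle is the construction step rather than the application of Proposition~\ref{propag.fire}: I need to justify that the pointwise rotated-ellipsoid model of Theorem~\ref{main.th.1} extends consistently and smoothly when $W$ varies, and in particular that the rotation must be about all three axes (so that $P$ really is an element of $SO(3)$, guaranteeing $\mathbb{B}=\{P\partial_x,P\partial_y,P\partial_z\}$ is an orthogonal frame and $\hbar=P^T\mathcal{D}P$ is genuinely a Riemannian matrix). Once this is in place, the remaining arguments are essentially the same level-set and orthogonality arguments used in the proofs of Theorems~\ref{main.th.1} and~\ref{main.th.2}, and no appeal to the Killing hypothesis or to the flow of $W$ is required.
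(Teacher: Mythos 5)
Your proposal is correct and follows essentially the same route as the paper: the pointwise construction of the indicatrix as the $W(p)$-translate of the rotated ellipsoid (\ref{ellipp}) is exactly the argument the paper imports from Theorem~\ref{main.th.2}, item (i) is the same direct appeal to Proposition~\ref{propag.fire}, and your level-set arguments for (ii) and (iii) are precisely the paper's "same as Theorem~\ref{main.th.2}(ii)--(iii) without $\gamma_{_h}$ and $\varphi$." Your write-up is in fact somewhat more explicit than the paper's about why all three rotation angles are needed and about smoothness of $a,b,c,\alpha,\beta,\theta$, but the substance is identical.
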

\begin{proof}
	
	To prove item $(i)$, it should be noted that once one has the Randers metric $F$ on $M$, by Proposition~\ref{propag.fire}, the wave rays are unitary $F$-geodesics that are $F$-orthogonal to $A$.  The metric $F$ is given by Eq.~(\ref{Randers1}) which demands finding the Riemannian metric $h$. To find $h$, one has to find the equation of Riemannian/Randers indicatrix, as it was explained after Lemma~\ref{indic}. By following the same argument as that of Theorem~\ref{main.th.2}, it can be shown that at each point $p\in M$ the Randers indicatrix is  translation of the rotated ellipsoid, given by Eq.~(\ref{ellipp}), by the vector $W(p)$. It completes the proof of item $(i)$. 
	
	
	
	
	The proofs of items $(ii)$ and $(iii)$ are similar to those of items $(ii)$ and $(iii)$ of Theorem~\ref{main.th.2}, respectively, in which one does not involve $\gamma_{_h}(t)$ and  $\varphi(.,.)$ in the arguments.
\end{proof}

\subsubsection{Strategic paths and points in the case of  wind being a smooth vector field}
Here, we verify the strategic paths and points in the case that wind is not necessarily a \textit{Killing} vector field.
\begin{corollary}\label{stra.3}
	Assume a wildfire is spreading  in some space $M$, the wind $W$, that is a smooth vector filed, is blowing throughout $M$  and $A$ is the wavefront at time $0$. Then:	
	
	\begin{itemize}
		\item [(i)] If all the points of $M$ have the same priorities, given any time $\tau$, the strategic path till time $\tau$ is   the unitary $F$-geodesic $\gamma_{_F}(t)$ that is $F$-orthogonal to $A$, provided that  $|{\gamma_{_F}}(\tau)-{\gamma_{_F}}(0)|$ is maximum among all such $F$-geodesics. 
		
		\item [(ii)]			Given any point $q$ belonging to the wavefront at time $\tau$, the strategic path which reaches to $q$ is the   unitary  $F$-geodesic $\gamma_{_F}(t)$  that is $F$-orthogonal to $A$ and ${\gamma_{_F}}(\tau)=q$.
		
		\item [(iii)] Given any area $B$, the strategic path that meets $B$ for the first time is the unitary $F$-geodesic $\gamma_{_F}(t)$ which is $F$-orthogonal to $A$ and ${\gamma_{_F}}(\tau)=q$.  Here,   $\tau$ is the time of the wavefront that intersects $ B$ for the first time and $q$ is the point of intersection.
	\end{itemize}
	
	%
	%
	
\end{corollary}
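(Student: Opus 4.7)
The plan is to reduce Corollary~\ref{stra.3} to Theorem~\ref{mainthm3} by following the same template used for Propositions~\ref{stra.1} and~\ref{stra.2}, exploiting the fact that strategic paths are, by definition, particular wave rays of the fire. Since Theorem~\ref{mainthm3} already identifies all wave rays as unit speed $F$-geodesics that are $F$-orthogonal to $A$, it remains only to add the optimality condition corresponding to each of the three scenarios.

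For item $(i)$, I would start by noting that any candidate strategic path is a wave ray of the fire, so by Theorem~\ref{mainthm3}$(i)$ it must be a unit speed $F$-geodesic $\gamma_{_F}(t)$ with $\gamma_{_F}'(0)\underset{F}{\perp}A$. The criterion that the fire engulfs \emph{more area} along this path, for the horizon time $\tau$, translates into the Euclidean displacement $|\gamma_{_F}(\tau)-\gamma_{_F}(0)|$ being maximal among all such wave rays emanating $F$-orthogonally from $A$. Note that, unlike the constant-wind case of Proposition~\ref{stra.1}$(i)$, the maximum need not be attained by a single ray valid for all times, because $F$-geodesics in a non-Minkowski Randers space are curved; hence the strategic path is only claimed until time $\tau$, and a different wave ray may overtake it later.

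For item $(ii)$, I would argue exactly as in the proof of Proposition~\ref{stra.1}$(ii)$: given $q$ on the wavefront at time $\tau$, there exists a unique wave ray from $A$ to $q$ (up to reparametrization), since wave rays are integral curves of the gradient of the Randers distance function $\rho(\cdot)=d(A,\cdot)$ from Proposition~\ref{propag.fire}, as observed in \cite{dehkordi2019huygens}. By Theorem~\ref{mainthm3}$(i)$ this ray is a unit speed $F$-geodesic with $\gamma_{_F}'(0)\underset{F}{\perp}A$, and the extra condition $\gamma_{_F}(\tau)=q$ pins it down. Item $(iii)$ would then be immediate: identify the smallest $\tau$ for which the wavefront (as described in Theorem~\ref{mainthm3}$(iii)$) intersects $B$, pick any point $q$ of that first intersection, and apply item $(ii)$ to get the strategic path reaching $B$.

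The arguments are essentially routine once Theorem~\ref{mainthm3} is in hand; the only mild subtlety is the uniqueness claim in $(ii)$, which relies on there being no cut loci in the region of interest (a hypothesis built into the setting of Proposition~\ref{propag.fire}) so that the gradient flow lines of $\rho$ are well-defined and give a one-to-one correspondence between points of $A$ and points of each wavefront $\rho^{-1}(\tau)$. I would flag this hypothesis explicitly and then the proof reduces to quoting Theorem~\ref{mainthm3}$(i)$ together with the optimality condition for each case; the proof of $(iii)$ can be compressed to a single sentence referring back to $(iii)$ of Proposition~\ref{stra.1}.
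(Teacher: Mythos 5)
Your proposal is correct and follows essentially the same route as the paper: the paper's own proof simply applies Theorem~\ref{mainthm3} and repeats the argument of Proposition~\ref{stra.2} (hence of Proposition~\ref{stra.1}) with the flow of $W$ and the $h$-geodesics removed, which is exactly your reduction. Your added remarks on the time-horizon restriction in item $(i)$ and the no-cut-locus hypothesis behind uniqueness in item $(ii)$ are consistent with the paper's standing assumptions and do not change the argument.
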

\begin{proof}
	One applies Theorem~\ref{mainthm3} and follows a proof similar to that of Proposition~\ref{stra.2}, in which the flow of $W$ and Riemannian geodesic $h$ do not get involved. 
\end{proof}

\section{Examples}

Here, we provide two examples in which we assume that a wildfire is spreading across some agricultural land or woodland $M\subset \mathbb{R}^3$. In fact we assume that $M$ is some open subset of $\mathbb{R}^3$. It is assumed that the fuel has been distributed smoothly across $M$ and some wind $W$ is blowing throughout $M$.  In Example \ref{exa1}, the wind is constant and in Example \ref{exa2}, it is a \textit{Killing} vector field. In both examples, the fire starts from  ${A}\subset M$, which is a certain point, smooth curve or cylinder. The objective in these examples is finding the wavefronts, wave rays and strategic paths. 		
It is assumed that $M$ contains some flat field of fuel $D$, that is $D$ is a $2$-dimensional subspace of $M$ such that its slope is negligible. In fact, if one walks around in $D$, he would feel some going up and down but negligible. For instance, $D$ might be the bed of agricultural land, a forest, and so forth. 
\begin{example}\label{exa1} 
	Assume that a wildfire is spreading in some agricultural land that $M$ and the wind  $W=(0,1/3,1/6)$ is blowing across $M$. We consider three different cases for $A$, that is the wavefront at time $0$, as follows:
	\begin{itemize}
		\item [Case 1.]  $A$ is some point in $D$.
		\item [Case 2.]  $ A$ is the path of closed curve\begin{equation}\label{case1}
			C(s)=\big(\frac{1}{4}\cos s(\cos s+ 6),\frac{4}{13}\sin s(-\sin s+3),0\big), s\in [0,2\pi].
		\end{equation}
		\item[Case 3.]  $A$ is the image of  surface \begin{equation}\label{case2}
			\hspace{-9mm}S(s_1,s_2)=\big(\frac{1}{4}\cos s(\cos s+ 6),\frac{4}{13}\sin s(-\sin s+3), s_2\big), s_1\in[0,2\pi], \ s_2\in [0,2].
		\end{equation}
	\end{itemize}
	
	Before verifying the cases, let's see what the Riemannian metric and indicatrix are.				 Since the wind is a constant vector field, by Theorem~\ref{main.th.1}, the indicatrix is  translation of the rotated ellipsoid given by Eq.~(\ref{el}), by  $W$. Assume that, from  experimental data, we are given the constant numbers in Eq.~(\ref{el}) as follows:
	\begin{align*}
		a=1/2,\ b=1\ ,c=2, \hspace{2mm} &  \alpha=\pi/6.
	\end{align*}
	Therefore, the equation of the spherical wavefront is 
	%
	%
	%
	Eq.~(\ref{ind.ape}) below:
	\begin{equation}\label{ind.ape}
		64u^2+13v^2+7w^2-(26/3+\sqrt{3})v+(2\sqrt{3}-7/3)w-6\sqrt{3}vw=\frac{\sqrt{3}}{3}+\frac{635}{36}.
	\end{equation}
	Hence, the matrix of the Riemannian metric is  
	\begin{align*}
		\hbar=
		\left(\begin{matrix}
			4 & 0 & 0\\
			0 & \frac{13}{16}  & -\frac{3\sqrt{3}}{16} \\
			0 &          -\frac{3\sqrt{3}}{16}  &  \frac{7}{16}
		\end{matrix}\right).
	\end{align*} 
	In the sequel, we find the wavefronts, wave rays and strategic paths for each of the cases listed above, separately.
	

	
	\begin{center}
		\textit{	Case $1$}
	\end{center}
	
	We consider the point $A$ as the origin of the coordinate system. By Theorem~\ref{main.th.1}, the wavefront at time $\tau$ is  translation of $$4\big(\frac{u}{\tau}\big)^2+\frac{13}{16}\big(\frac{v}{\tau}\big)^2+\frac{7}{16}\big(\frac{w}{\tau}\big)^2-\frac{3\sqrt{3}}{8}\big(\frac{v}{\tau}\big)\big(\frac{w}{\tau}\big)=1,$$ by  $\tau W$.

	The set of  wave rays, by item $(ii)$ of Theorem~\ref{main.th.1},  is \[\{\gamma_{_F}(t)=tV:\   |V-W|_{_h}=1\}, \] where $V=(v_1,v_2,v_3)$ and the equality $|V-W|_{_h}=1$ is equivalent to 
	\begin{equation}\label{in-ex}
		64{v_1}^2+13v_2^2+7v_3^2-(\frac{26}{3}+\sqrt{3})v_2+(2\sqrt{3}-\frac73)v_3-6\sqrt{3}v_2v_3=\frac{\sqrt{3}}{3}+\frac{635}{36}.
	\end{equation}

	In order to track the strategic path which reaches to the wavefront at time $\tau$, it suffices to find the point $q=(u,v,w)$ for which  $\sqrt{u^2+v^2+w^2}$ is maximum among all other points belonging to this wavefront. Afterwards, the strategic path is $\gamma_{_F}(t)=t\frac{q}{\tau}$. The Fig. \ref{wavefr1} depicts,  in $D$,  wavefronts at  $t=1\,,2\,,\cdots,\,10$ and the strategic path from $t=0$ to $t=10$.

	\begin{figure}[h!]
		\centering
		\includegraphics[scale=1]{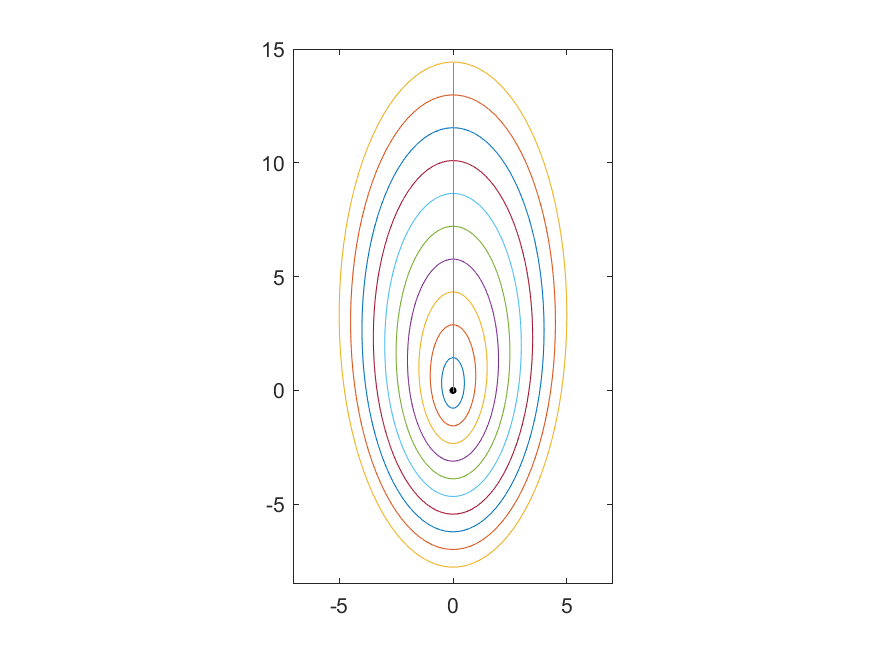}
		\caption{
		\small	some of wavefronts and strategic path in $D$ from $1$ to $10$ for  \textit{Case 1}}
		\label{wavefr1}
	\end{figure} 
	\begin{center}
		\textit{ Case $2$}
	\end{center}
	
	To find the wavefront at time $\tau$, one may use the spherical wavefronts given by Eq.~(\ref{ind.ape}) and centers at different points belonging to $C([0,\pi])$, and then apply the Huygens' principle.		
	
	The
	wave ray that emanates from some point $p = C(s_p)$  is  $\gamma_{_F}(t) = p + tV$,	
	 	  where	 	  $V=(v_1,v_2,v_3)$ is the solution of the  following  equations system  at point $p$.
	
	\begin{equation}
		\left\{\begin{array}{l}\label{syst1}
			\langle V-W, C'(s_p)\rangle=0\\
			|V-W|_{_h}=1.\\
		\end{array}\right.
	\end{equation}
	Here, in the system (\ref{syst1}), $|V-W|_{_h}=1$ is equivalent to Eq.~(\ref{in-ex}) and the first equation is equivalent to \[- v_1\big(\sin 2s_p+6\sin s_p\big)+\big(\frac{13}{16}(v_2-\frac13)-\frac{3\sqrt{3}}{16}(v_3-\frac16)\big)\frac{1}{13}\big(-\sin 2s_p+3\cos s_p
	\big)=0.\]

	In the case that all points of wavefront have the same priority, to find the strategic path that reaches  to the wavefront at time $10$   one has to find the wave ray ${\gamma}_{_F}(t)=p+tV$ for which $v_1^2+v_2^2+v_3^2$ is maximum. Fig. \ref{wavefr2} depicts some of the wavefronts from time $1$ to $10$, the strategic path (in purple color) from till time $10$, and the path (in black color) through which the fire is progressing slower.
	
	\begin{figure}[h!]
		\centering
		\includegraphics[scale=1]{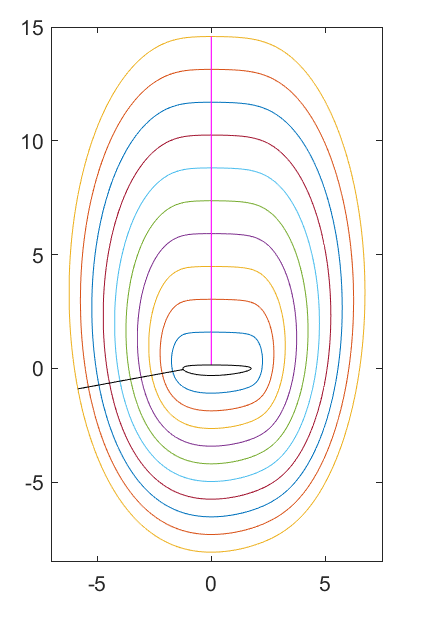}
		\caption{\small some of wavefronts from $1$ to $10$ and strategic path (purple) for~\textit{Case~2} in $D$}
		\label{wavefr2}
	\end{figure} 
	\vspace{3mm}
	\begin{center}
		\textit{Case $3$}
	\end{center}
	
	In order to find the wavefront at time $\tau$, one way is finding the spherical wavefronts with centers at points $p\in S([0,2\pi]\times [0,2])$, and then the hypersurface which is tangent to all of these spherical wavefronts. 
	
	To discover the wave ray that emanates from some point $p= S(s_{1,p},s_{2,p})$, i.e. $\gamma_{_F}(t)=p+tV$, one has to find vectors $V$ that satisfies the following system
	
	\begin{equation}\label{case2.eq}
		\left\{\begin{array}{l}
			\langle V-W, \frac{\partial S}{\partial s_1}\rangle=0,\\
			\langle V-W,\frac{\partial S}{\partial s_2}\rangle=0,\\
			|V-W|_{_h}=1,\\
		\end{array}\right.
	\end{equation} 
	provided that $V-W$ is in the direction of the propagation.
	The first equation in the system \ref{case2.eq} is equivalent to $$-\frac25v_1\sin s_1(2\cos s_1+1)+\frac{1}{128}\cos s_1(2\sin s_1+1)\big(13(v_2-\frac13)+3\sqrt{3}(v_3-\frac16)\big)=0,$$ and the second equation is equivalent to $$\frac{3\sqrt{3}}{7}(v_2-\frac13)=v_3-\frac16.$$ Once one has the wave rays, the set $\{\gamma_{_F}(\tau)=p+\tau V: \ p\in S([0,\pi]\times [0,2]) \}$ is the wavefront at time $\tau$.

	If all the points of $M$ have the same priority, the strategic path	that reaches  to the wavefront at time $\tau$  is the wave ray ${\gamma}_{_F}(t)=p+tV$ for which $v_1^2+v_2^2+v_3^2$ is maximum.
\end{example}

\begin{example}\label{exa2}
	Assume that a wildfire is spreading in some agricultural land  and the wind $W$ is blowing across it. we have  $W(p)=k(y,0,0)$, where $p=(x,y,z)\in M$ and $k$ is some small enough constant number so that the origin of  tangent space remains inside the indicatrix. Similar to the Example \ref{exa1}, we consider three different cases for the wavefront at time $0$, i.e. $ A$. In other words, $A$ is a point or some curve given by Eq.~\eqref{case1} or surface given by Eq.~(\ref{case2}). As the first step, one has to  find the indicatrix. Since the wind is some smooth vector field, by Theorems \ref{main.th.2} and \ref{main.th.2}, the indicatrix must be translation of the rotated ellipsoid, given by Eq.~(\ref{ellipp}), by the vector $W$. Keeping in mind Eq.~(\ref{ellipp}), assume that from  experimental data we are given $a=1, \ b=\frac12,\ c=2$,  $\alpha=\theta=0$, and $\beta=y$. Therefore, the indicatrix is  translation of the following ellipsoid by $W$:
	\begin{align}\label{elll}
		Q_{_h}(u,v,w)=({u\cos y+w\sin y})^2+ 4v^2+ (\frac{-u\sin y+w\cos y}{2})^2=1.
	\end{align}
	From Eq.~(\ref{elll}), the Riemannian metric is $h=P^T\mathcal{D}P$, where $\mathcal{D}=diag(1,\frac12,2)$  and $P(x,y,z)=R_y(y)$. From the fact that $\frac{\partial}{\partial x}(P_{ki}P_{kj})=0$,
	it is deduced that $W$ is a \textit{Killing} vector field. 
	It is easy to see that the flow of $W$ is as 
	$\varphi(t,p)=kt(y,0,0)+p$, where $p=(x,y,z)$. We apply Theorem~\ref{main.th.2} to deal with the propagation and obtain  wavefronts, wave rays, and strategic paths for each case.
	\vspace{3mm}
	\begin{center}
		\textit{Case $1$}
	\end{center}
	
	The wave rays emanating from some point $p$ and the unitary velocity vector  $V$  are $$\gamma_{_F}(t)=kt(y(t),0,0)+(x(t),y(t),z(t)),$$ where $\gamma_{_h}(t)=(x(t),y(t),z(t))$ is  solution of the following system  
	\begin{align}\label{eq1}
		\left\{	\begin{array}{l}
			x^{\prime\prime}+y^{\prime}z^{\prime}=0,\\
			y^{\prime\prime}=0,\\
			z^{\prime\prime}-x^{\prime}y^{\prime}=0,\\
		\end{array}	\right.\end{align}
	with the initial conditions $\gamma_{_h}(0)=p=(x,y,z)$, $\gamma_{_h}'(0)=V-W$ and $|V-W|_{_h}=1$. After some calculations, if $v_2\neq 0$, we have
	
	\begin{align}\label{eq2}
		\left\{	\begin{array}{l}
			x(t)=x-\frac{v_3}{v_2}+t\frac{v_3}{v_2}\cos v_2 + t\frac{v_1-ky}{v_2}\sin v_2,\\
			y(t)=tv_2+y,\\
			z(t)=z-\frac{v_1-ky}{v_2}-t\frac{v_1-ky}{v_2}\cos v_2+t\frac{v_3}{v_2}\sin v_2.\\
		\end{array}	\right.\end{align} and if $v_2=0$, 
	
	\begin{align}\label{eq3}
		\left\{	\begin{array}{l}
			x(t)=(v_1-ky)t+x,\\
			y(t)=y,\\
			z(t)=v_3t+z,\\
		\end{array}	\right.\end{align}
	are component of $\gamma_{_h}(t)$, provided that $|V-W|_{_h}=1$. One can easily verify that $|V-W|_{_h}=1$ is equivalent to 
	\begin{equation}
		4(v_1-ky)^2+\frac{13}{16}v_2^2-\frac{3\sqrt{3}}{8}v_2v_3+\frac{7}{16}v_3^2=1.
	\end{equation}
	
	Once one has the curves $\gamma_{_h}(t)=(x(t),y(t),z(t))$,  which are satisfied in the systems \ref{eq2} or \ref{eq3}, the wavefront at time $\tau$ is the set
	$$\{k\tau(y(\tau),0,0)+(x(\tau),y(\tau),z(\tau))\}.$$

	The strategic path for the situation that all  points belonging to the space have the same priority is the wave ray $\gamma_{_F}(t)=kt(y,0,0)+\gamma_{_h}(t)$ provided that $|k\tau(y,0,0)+\gamma_{_h}(\tau)-p|$ is maximum among all the curves $\gamma_{_h}(t)$. If one wants to find the wave ray that reaches to some point $q$  belonging to the wavefront at time $\tau$, it is   $\gamma_{_F}(t)=kt(y,0,0)+\gamma_{_h}(t)$ such that $\gamma_{_h}(\tau)=q-k\tau(y,0,0)$.

	\vspace{3mm}
	\begin{center}
		\textit{Case $2$}
	\end{center}
	
	For Case $2$, the wave ray which emanates from some point $p= C(s_p)$ is  $\gamma_{_F}(t)=kt(y(t),0,0)+(x(t),y(t),z(t))$ such that $\gamma_{_h}(t)=(x(t),y(t),z(t))$ is a solution of the systems  \ref{eq2} or \ref{eq3} and it also satisfies $|\gamma'_{_h}(0)|=1$, $\gamma_{_h}(0)=p$, and $\gamma_{_h}'(0)\underset{h}{\perp} C'(s)$  in the direction of the propagation.  
	
	The wavefront at time $\tau$ is the set
	
	$$\{k\tau(y(\tau),0,0)+(x(\tau),y(\tau),z(\tau)) \}.$$ 
	
	The calculations related to  strategic paths, and  wave rays and wavefront in the Case $3$ are similar to the previous case and the Case $3$ of  Example \ref{exa1}.

\end{example}

\section{Conclusion and Final Remarks}

In this work, we presented a model for wildfire  propagation  in some smooth manifold  under the presence of  wind. The results and proofs can be generalized to any dimension $n$, however to be closer to reality, we focused on the dimension $3$.  Three different cases for the wind were considered; a constant, \textit{Killing}, and smooth vector field. These three cases were separately studied  through Theorems \ref{main.th.1}, \ref{main.th.2}, and \ref{mainthm3}. This way, equations of  wavefronts and wave rays were provided. To summarize the results of these theorems, for a wildfire spreading in some space, we provided two ways to find the model of propagation as follows.
\begin{itemize}

	
	\item [(i)] \textit{Using the Huygens' principle}. It means, given the wavefront at some time $t_0\geq 0$, we find the spherical wavefronts of radius $r>0$ and center of each at some point belonging to this wavefront. Then we find the envelope of these spherical wavefronts; that is the wavefront at time $t_0+r$. This method is more applicable when we want to use some computer programming and software for modeling the propagation. 
	
	\item [(ii)] \textit{Using geodesics}. The unitary  Randers geodesics that start  orthogonally from some wavefront are  wave rays  and all of them reach to the next wavefront at the same time. It means given some wavefront $A$, after time $\tau$, the wavefront will be the set  $$\{{\gamma_{_F}}(\tau)\ \},$$ where $\gamma_{_F}(t)$ are unitary Randers geodesics that emanates from $A$ orthogonally.

\end{itemize}

One can find Randers geodesics without even calculating the metric. In Theorems \ref{main.th.1} and \ref{main.th.2}, the formulas of Randers geodesics, for the cases of wind being a constant or \textit{Killing} vector field, are provided. For the case of  wind being a smooth vector field, by Theorem~\ref{mainthm3} and by formulas and relations provided in Section 2.3 of \cite{cheng2012finsler}, one obtains the system of Randers geodesic equations from the system of  Riemannian geodesic equations. Therefore, given a propagation, it suffices to find the Riemannian metric, and then the system of geodesic equations  associated with it, and finally the wavefronts. 

By the way, concepts of strategic paths and points were introduced and their equations were determined  for different situations. These paths and points are those which are of great importance in wildfire management strategies; by providing the places where the firefighters and equipment should be located. Two examples, in which some wildfire is spreading in some agricultural land, were given to illustrate the results.

In this work, it was assumed that the wind $W$ is  time-independent. However, one can apply our results for the case that the wind is some time-dependent vector field $W(t,p),$  $t\in[0,s]$, assuming that $\{t_1,....,t_n\}$ is a partition of $[0,s]$ such that at each interval  $[t_i,t_{i+1}]\subset [0,s]$, the wind is the time-independent vector field $W^i(p)$. Therefore, at each interval $[t_i,t_{i+1}]$, we are faced with a new propagation and have to apply one of the Theorems \ref{main.th.1}, \ref{main.th.2} or \ref{mainthm3} to modeling it. Generally, there is no relations between the  metric  related to one interval with that of another one.

By the way, in the case that we also need the model of wildfire propagation  in dimension $2$, that is on the land, it suffices to consider the intersection of  wavefronts/propagation  with the land.

For further works in the future, one may work on the cases that the wildfire creates singularities or cut loci. Also , it is interesting to see how the wildfire spreads when the fuel has not been  distributed smoothly through space.  Moreover, studying the propagation of  wildfire on a mountain or some slope is welcome as it is related to certain  real situations. In this regard, one problem can be finding the relations between propagation of wildfires on two domains under the same conditions but different slopes. By the way, as another  problem, one may study the model associated with a strong wind, that is when the origin of tangent space does not locate inside the indicatrix.

\renewcommand{\baselinestretch}{.5}
\bibliography{bibliojournalversion}
\bibliographystyle{acm}

\end{document}